\documentclass[11pt]{article}
\usepackage{amsfonts}
\usepackage{amssymb}
\usepackage{graphics}
\usepackage{amssymb,amsmath,epsfig,graphics,latexsym,psfrag}
\usepackage[english]{babel}
\usepackage[all]{xy}
\usepackage{amscd, amssymb, amsmath, amsthm, graphics}
\usepackage{amsmath,amsfonts,amsthm,amssymb}
\usepackage{latexsym,amsmath}
\usepackage{graphicx,psfrag}
\usepackage{mathrsfs}

\setlength{\textheight}{245mm}
 \setlength{\voffset}{-20.4mm}
\setlength{\parskip}{\medskipamount}

\def\G{(\Gamma,{\cal G})}
\def\Z{{\Bbb Z}}

\newtheorem{theorem}{Theorem}[section]
\newtheorem{corollary}[theorem]{Corollary}
\newtheorem{lemma}[theorem]{Lemma}
\newtheorem{example}[theorem]{Example}

\begin{document}
\title{Extending finite group actions on surfaces over
$S^3$}
\author{Chao Wang, Shicheng Wang, Yimu Zhang\\ and \\Bruno Zimmermann}

\date{}
\maketitle

\centerline{Department of Mathematics} \vskip 0.3truecm
\centerline{Peking University, Beijing, 100871 CHINA}
\centerline{and}
 \centerline{ University of Trieste, Trieste, 34127 Trieste
ITALY}

\begin{abstract} Let $OE_g$
(resp. $CE_g$ and   $AE_g$) and resp. $OE^o_g$ be the maximum order
of finite (resp. cyclic and abelian) groups $G$ acting on the closed
orientable surfaces $\Sigma_g$ which extend over $(S^3, \Sigma_g)$
among all embeddings $\Sigma_g\to S^3$ and  resp. unknotted
embeddings $\Sigma_g\to S^3$.

It is known that $OE^o_g\le 12(g-1)$, and we show that $12(g-1)$ is
reached for an unknotted embedding $\Sigma_g \to S^3$ if and only if
$g = 2$, 3, 4, 5, 6, 9, 11, 17, 25, 97, 121, 241, 601. Moreover
$AE_g$ is $2g+2$; and $CE_g$ is $2g+2$ for even $g$, and $2g-2$ for
odd $g$.

Efforts are made to see intuitively how these maximal symmetries are
embedded into the symmetries of the 3-sphere\footnote{Keywords:
finite group action, extendable action, symmetry of surface,
symmetry of 3-sphere, maximum order.}.
\end{abstract}

\tableofcontents

\section{Introduction}
In this article, we use $\Sigma_g$ to denote the orientable closed
surface of genus $g>1$, and use $V_g$ to denote the handlebody of
genus $g>1$. All group actions will be faithful and
orientation-preserving (on both surfaces and 3-manifolds).

Let  $O_g$  (resp. $C_g$ and   $A_g$)   be the maximum order of all
 finite  (resp. cyclic and abelian) groups $G$ which can act  on
$\Sigma_g$. A classical result of Hurwitz states that $O_g$ is at
most $84(g - 1)$, proved by applying the Riemann-Hurwitz formula.
However for each fixed $g$, $O_g$ is hard to determine in general,
see \cite{Ac}, \cite{Ma} for some partial results. On the other hand
$C_g$ is $4g+2$, see \cite{St}, also \cite{Ha}, and \cite{Wa} for
more direct constructions; and $A_g$ is $4(g+1)$, see \cite{Ma}.

Let  $OH_g$  (resp. $CH_g$ and   $AH_g$)   be the maximum order of
all
 finite  (resp. cyclic and abelian) groups $G$ which can acts  on
the handlebody $V_g$. By definition we have that $O_g\ge OH_g,
C_g\ge CH_g$ and $A_g\ge AH_g.$ It is a result due to Zimmermann
\cite{Zi1} that $OH_g\le 12(g-1)$, see also \cite{Zi2}, \cite{MMZ},
\cite{MZ}. A handlebody orbifold theory was derived in \cite{MMZ},
see also \cite{Zi3} for a more geometric approach, and then proved
that $CH_g$ is $2g+2$ when $g$ is even, and $2g$ when $g$ is odd
\cite{MMZ}. Moreover $OH_g$ is bounded below by $4(g+1)$, and $OH_g$
is either $12(g-1)$ or $8(g-1)$ if $g$ is odd, and each of these is
achieved by infinitely many odd $g$ \cite{MZ}.

In the present article, we consider the following extension
problems: Suppose a finite group $G$ acts  on the surface
$\Sigma_g$. If there is an embedding $e: \Sigma_g\subset S^3$ such
that $G$ can act on the pair $(S^3, \Sigma_g)$ and the restriction
to $\Sigma_g$ is the given $G$ action on $\Sigma_g$, we call the
action of $G$ on $ \Sigma_g$  extendable (over $S^3$ with respect to
$e$).

Call an embedding $e_o:  \Sigma_g\to S^3$ unknotted if each
component of $S^3\setminus  \Sigma_g$ is a handlebody. So each
extendable $G$ action w.r.t. an unknotted embedding $e_o$ provides a
$G$-invariant Heegaard splitting of $S^3$.
Similarly we define an action of $G$ on $V_g$ to be extendable, and
an embedding $e_o:V_g \to S^3$ to be unknotted if the complement
$S^3\setminus V_g$ is also a handlebody. For each $g$, an unknotted
embedding is unique up to isotopy of $S^3$ and automorphisms on
$\Sigma_g$ (resp. $V_g$).

In such extension problems,
we first study  the maximum orders in the present paper. Let $OE_g$
(resp. $CE_g$ and   $AE_g$)   be the maximum order of all extendable
finite (resp. cyclic and abelian) groups $G$ acting on $\Sigma_g$.
It is not obvious that $OH_g\ge OE_g,  CH_g\ge CE_g$ and $AH_g\ge
AE_g.$
So finer notions may be useful at the moment: Let $OE^o_g$  (resp.
$CE^o_g$ and   $AE^o_g$)   be the maximum order of a finite  (resp.
cyclic and abelian) group $G$ acting on $\Sigma_g$ which extends
over $S^3$ w.r.t. an unknotted embedding. Then

(1) $OE_g\ge OE^o_g, CE_g\ge CE^o_g$ and $AE_g\ge AE^o_g$.

(2) $OH_g\ge OE^o_g,  CH_g\ge CE^o_g$ and $AH_g\ge AE^o_g.$




Now we are going to describe the results and the organization of the
paper.

Even to determine $OE_g$ and  $OE^o_g$ are harder, some discussions
are made in Section 2. It is clear $OE^o_g\le 12(g-1)$. We show that
there are only finitely many $g$ such that $OE^o_g=12(g-1)$, and
indeed we list all such $g$ (Theorem \ref{realzation of 12(g-1)},
see also the Examples in Section 4). It is derived that for each
$g$, $4(g+1)$ is a lower bound for $OE^o_g$ (Example 4.3), and for
each $g=n^2$ a lower bound of $OE^o_g$ is $4(n+1)^2$ which is larger
than $4(g+1)$ (Example 4.4).

In Section 3, we discuss  the abelian case and the cyclic case which
are easier. By applying the handlebody orbifold theory, we will
first derive the needed information about orders of  large abelian
and cyclic group action on $V_g$ (Theorem \ref{abelian1} and Theorem
\ref{cyclic2}). Then we show that $AE_g$ is $2g+2$ (Theorem
\ref{abelian}), and $CE_g$ is $2g+2$ for even $g$, and $2g-2$ for
odd $g$  (Theorem \ref{cyclic}). All these maximum order group
actions are realized by unknotted embedding (Examples 4.1 and 4.2),
hence $CE^o_g=CE_g$ and $AE^o_g=AE_g$.

\noindent\textbf{Question 1.} {\it If an embedding $\Sigma_g\to S^3$
realizes $OE_g$, should  the embedding be unknotted? Weakly does
$OE_g=OE^o_g$  for each $g>1$?}

The existence of extendable group actions on surfaces with large
symmetry presented in Sections 2 and 3 are mostly derived from the
orbifold theory.  On the other hand in the process of this work,
most large symmetries in Sections 2 and 3 are first constructed in a
more direct and intuitive way without using orbifold theory. Section
4 presents those constructions which show us how those symmetries on
surfaces stay in the symmetries on 3-sphere. A reason of doing so is
given in the next paragraph.

We end the introduction by the some motivations of our study:
Surfaces (as well as handlebodies) are very familiar  subjects to
us, mostly because  we can see them staying in our 3-space in
various manners. The symmetries of the surfaces have been studied
for a long time, and it will be natural to wonder when these
symmetries can be embedded into the symmetries of our 3-space
(3-sphere). Another inspiring fact is a related problem on extending
surface automorphisms over 4-space which had been addressed  30
years ago \cite{Mo}, and for recent developments see \cite{Hi},
\cite{DLWY} and \cite{LNSW}.

\bigskip\noindent\textbf{Acknowledgement}. The first author is supported by Beijing International Center for Mathematical Researchㄛ Peking University.
The second author is partially supported by grant No.10631060 of the
National Natural Science Foundation of China.

\section{Maximum order of extendable group action}

From now on all groups in this paper will be finite.

A simple  picture we should keep in mind is the following: Suppose
that the action of $G$ on $ \Sigma_g$ is extendable w.r.t. some
embedding $\Sigma\subset S^3$. Let $\Gamma=\{x\in S^3|\,\exists
\,g\in G, s.t.\,gx=x\}$; then $\Gamma$ is a graph, possibly
disconnected, and $S^3/G$ is a 3-orbifold whose singular set
$\Gamma/G$ is also a graph. Each edge of $\Gamma/G$ can be labeled
by an integer $>0$ which corresponds to the singular index of it.
Also, $ \Sigma_g/G$ is a 2-orbifold with singular set $
\Sigma_g/G\cap \Gamma/G$, which are isolated points.

We recall the handlebody orbifold theory from \cite{MMZ},
\cite{Zi3}.

Let $G$ be a finite group acting on a handlebody of genus $g$.
Associated to this action there is a handlebody orbifold $V_g/G$, a
finite graph of finite groups $\G$ and a surjection $\phi :\pi_1\G
\longrightarrow   G$ whose kernel is isomorphic to a free group of
rank $g$; in particular, $\phi$ is injective on the finite vertex
groups of $\G$. Here $\pi_1\G$ denotes the fundamental group of the
graph of groups or of the corresponding handlebody orbifold:  this
is the iterated free product with amalgamation and HNN-extension
over the vertex groups, amalgamated over the edge groups of a
maximal tree, with the HNN-generators corresponding to the edges in
the complement of the chosen maximal tree. We denote by
$$ \chi = \chi\G = \sum 1/|G_v| - \sum 1/|G_e|\qquad (2.1)$$
the Euler characteristic of the graph of groups $\G$ (the sum is
taken over all vertex groups $G_v$ resp. edge groups $G_e$ of $\G$);
then
$$g-1 =  |G|  (-\chi) \qquad (2.2).$$

The vertex groups $G_v$ belongs to one of the following five classes
which correspond to the five types of finite subgroups of the
orthogonal group $SO(3)$ given in Figure 1. The edge groups $G_e$
are cyclic groups which are either trivial or maximally cyclic in
the adjacent vertex groups. We can also assume that the edge group
of an edge which is not a loop (not closed) does not coincide with
one of the two vertex groups of the edge.

\begin{center}
\scalebox{0.6}{\includegraphics*[0pt,0pt][574pt,131pt]{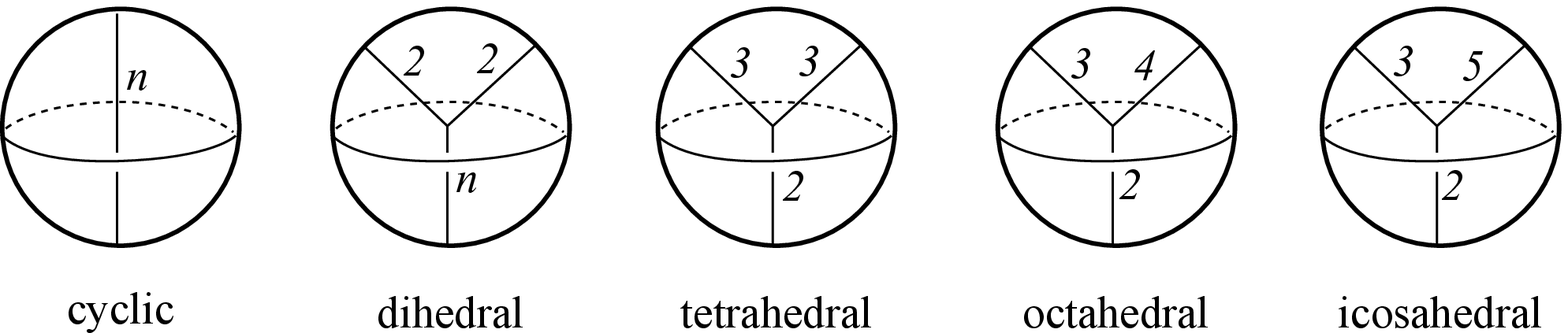}}

Figure 1
\end{center}

Conversely, to each such finite graph of finite groups associated to
a handlebody orbifold and surjection $\phi: \pi_1\G \to G$, there is
a corresponding action of $G$ on a handlebody $V_g$ of genus $g$.

\begin{theorem}\label{realzation of 12(g-1)}
Suppose the $G$-action on $ \Sigma_g$ is extendable over $S^3$
w.r.t. the unknotted embedding $e_o$ and the order of $G$ is
$12(g-1)$. Then $g$ is as follows: g = 2, 3, 4, 5, 6, 9, 11, 17, 25,
97, 121, 241, 601.
\end{theorem}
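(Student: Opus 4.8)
The plan is to exploit the handlebody orbifold theory recalled above, applied to the handlebody $V_g$ cut off on one side of the unknotted embedding $e_o$. Since the $G$-action extends over $S^3$ with respect to an unknotted embedding, $G$ acts on the Heegaard handlebody $V_g$, and by $(2.2)$ we have $g-1 = |G|(-\chi\G) = 12(g-1)(-\chi\G)$, so $-\chi\G = 1/12$. Thus the first step is to enumerate all finite graphs of finite groups $\G$, with vertex groups among the five $SO(3)$-types of Figure 1 and edge groups maximally cyclic (or trivial), whose Euler characteristic $\chi\G = \sum 1/|G_v| - \sum 1/|G_e|$ equals $-1/12$. This is a finite combinatorial problem because $-\chi\G = 1/12$ forces the graph to be small: the vertex contributions $1/|G_v|$ are bounded below by $1/60$ (the icosahedral group being the largest), and each positive edge contribution $-1/|G_e|$ must be compensated, so only a handful of graph shapes (a single vertex with a loop, two vertices with one connecting edge, etc.) can occur. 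I would organize the enumeration by the underlying graph and then by the vertex-group types, writing $1/12 = \sum 1/|G_e| - \sum 1/|G_v|$ and solving the resulting Diophantine constraints.

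The second step is, for each admissible $\G$, to determine for which $g$ there exists a surjection $\phi : \pi_1\G \to G$ onto a group $G$ of order $12(g-1)$ with free kernel of rank $g$, that also corresponds to an action extending over $S^3$ rather than merely over the handlebody $V_g$. The order $|G|$ is not determined by $\chi\G$ alone — only the ratio $(g-1)/|G|$ is — so for each graph one gets a family of candidate groups $G$, and the genus is recovered as $g = |G|/12 + 1$. The extendability over $S^3$ (not just over $V_g$) is the extra constraint that cuts the list down: the action on $V_g$ must be chosen so that the induced action on the Heegaard surface $\Sigma_g$ also bounds a handlebody action on the other side in a compatible way, equivalently so that the two handlebody orbifolds glue to the spherical orbifold $S^3/G$. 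I expect this forces $S^3/G$ to be one of the standard spherical orbifolds with graph singular locus, so $G$ must be (a subgroup of) one of the finite subgroups of $SO(4)$ acting on $S^3$, and in particular $G$ is heavily constrained — likely $G$ is a specific small family of groups, each giving one value of $g$ via $g = |G|/12+1$.

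The third step is bookkeeping: assemble the values of $g$ produced, namely $g = |G|/12 + 1$ for each group $G$ that survives both constraints, and check that the resulting list is exactly $\{2,3,4,5,6,9,11,17,25,97,121,241,601\}$. The corresponding orders $|G| = 12(g-1)$ are $12, 24, 36, 48, 60, 96, 120, 192, 288, 1152, 1440, 2880, 7200$; one should recognize these (e.g. $60$ and $120$ from the (binary) icosahedral group, $24,48$ from octahedral/tetrahedral-type groups, and the larger ones from groups with large cyclic or dihedral-by-cyclic structure sitting inside $SO(4)$) and exhibit an actual action for each — the explicit constructions being deferred to the Examples in Section 4.

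The main obstacle I anticipate is the second step: distinguishing actions that extend over all of $S^3$ from those that merely extend over one handlebody. The Euler-characteristic count alone yields infinitely many $(G,g)$ pairs realizing an action on $V_g$ of order $12(g-1)$ — consistent with the known fact that $OH_g = 12(g-1)$ for infinitely many $g$ — so the finiteness of the list is genuinely a consequence of the $S^3$-gluing condition, and making that condition effective (identifying exactly which pairs of handlebody orbifolds glue up to give $S^3$, i.e. classifying the relevant spherical $3$-orbifolds with graph singular set and their $\pi_1$-images) is where the real work lies. A clean way to do this is to pass to the quotient orbifold $\mathcal{O} = S^3/G$, observe it is a spherical orbifold whose singular set is a graph $\Gamma/G$ admitting a separating $2$-sphere-with-marked-points (the image of $\Sigma_g$), and then invoke the classification of such spherical orbifolds; each one pins down $|\pi_1^{orb}(\mathcal O)| = |G|$ and hence $g$.
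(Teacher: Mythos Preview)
Your proposal is correct and follows essentially the same route as the paper: reduce to the handlebody orbifold with $\chi\G=-1/12$, glue the two copies to obtain the spherical quotient orbifold $S^3/G$, and then invoke the classification of spherical $3$-orbifolds to pin down $|G|$ and hence $g$. The paper's execution is more concrete than your outline in two respects worth noting: first, $\chi\G=-1/12$ forces the handlebody orbifold to be the \emph{unique} one of Figure~2(a), so the glued orbifold $S^3/G$ has underlying space $S^3$ and singular set a $2$-bridge link with its two standard unknotting tunnels of orders $p,q\le 5$ (the orbifolds $O(\sigma;p,q)$); second, the finiteness and the explicit list come from checking these against Dunbar's tables of spherical orbifolds \cite{Du1,Du2}, with a separate and neater argument in the case $p=q=2$ via the $\Z_2\times\Z_2$ cover by a $2$-bridge link orbifold with branching index $3$.
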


\begin{proof}
The values of $g$ can be obtained as follows.

\begin{center}

\scalebox{0.5}{\includegraphics*[0pt,0pt][650pt,459pt]{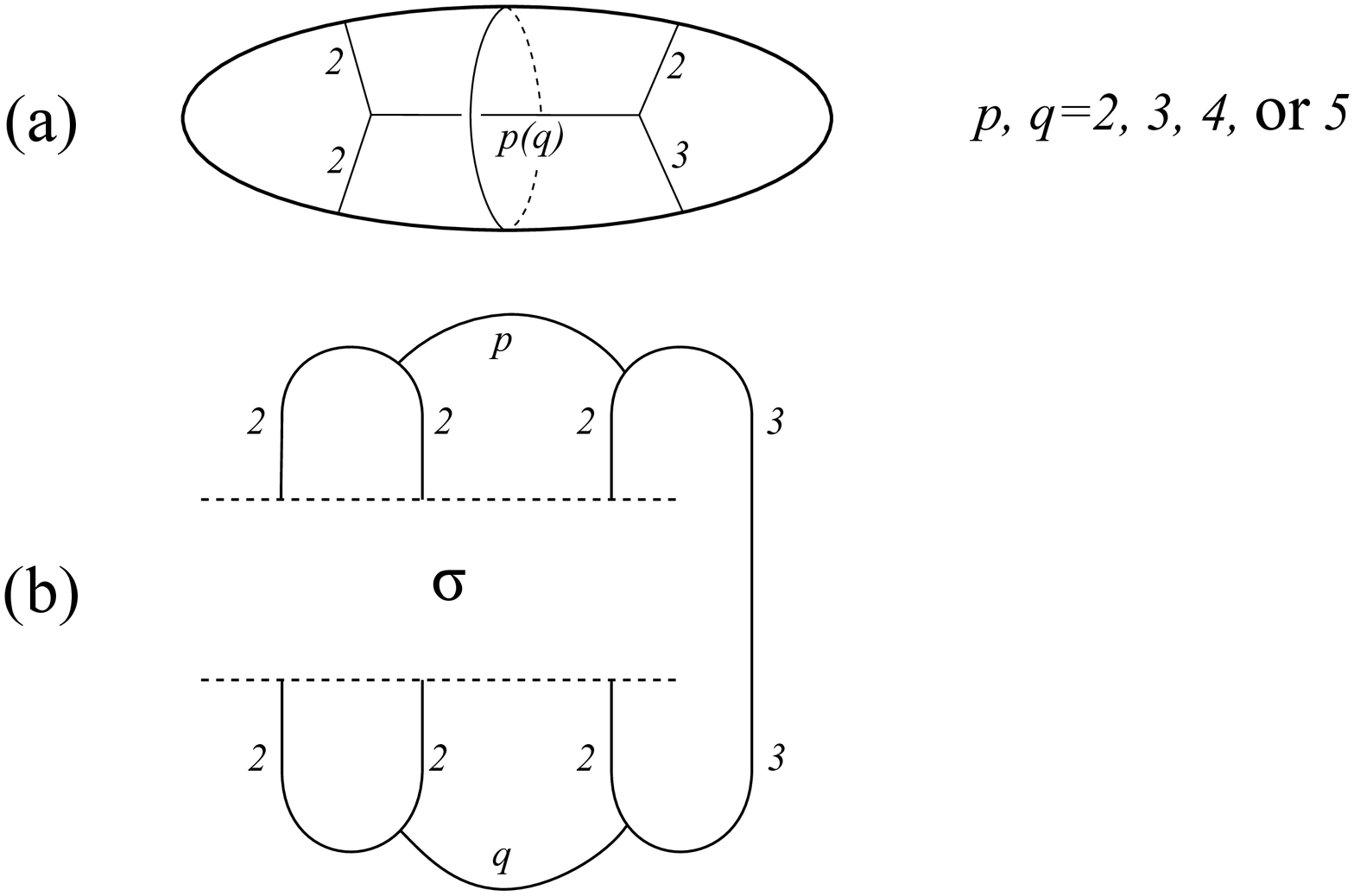}}

Figure 2

\end{center}

The unknotted embedding $ \Sigma_g\to S^3$ provides a Heegaard
splitting $S^3=H_1\cup_{ \Sigma_g} H_2$ such that each handlebody
$H_i$ is invariant under the extendable $G$-action on $(S^3,
\Sigma_g)$. Since $|G|=12(g-1)$, each handlebody orbifold $H_i/G$
has the underlying space $B^3$ and singular set as indicated in
Figure 2 (a). Then the quotient-orbifold $S^3/G$ is $S^3$, the
singular set is a 2-bridge link with the two standard unknotting
tunnels of branching orders $p$ and $q$ added, with $p,q \le 5$, as
indicated in Figure 2(b); three of the strands of the 2-bridge link
have branching order 2, the remaining one branching order 3, where
$\sigma$ is a braid on 3 strands, and these are the orbifolds
$O(\sigma;p,q)$, see \cite{Zi3} for details.

All spherical 3-orbifolds $S^3/G$ with underlying space
$|S^3/G|=S^3$ are listed in Tables 6, 7, 8  of Dunbar's paper
\cite{Du1} (pages 89-93): Each singular set is a graph with vertex
valency at most 3, and each edge is labeled by an integer indicating
the singular index of the edge, with the convention that each
unlabeled edge has index 2. Each small  box encoded by an integer
$k$ indicates two parallel arcs with $k$-half twists in the box, and
each small box encoded by two integers $m,n$ indicates a rational
tangle given by $(m,n)$ with a ``strut" connecting the two arcs of
the tangle and labeled by the largest common divisor of $m$ and $n$.

Now we use  Dunbar's list   \cite{Du1} to see which of the orbifolds
in Figure 2 are spherical. There are two cases.

(1) If  not both $p$ and $q$ are equal to 2 then there is a singular
point which is not dihedral, so it is of type $A_4$ (tetrahedral),
$S_4$ (octahedral) or $A_5$ (icosahedral). In Dunbar's list, these
are only the non-fibered orbifolds on page 93 of \cite{Du1}. By
further checking which graph can meet a 2-sphere $S^2$ with four
singular points of indices $(2,2,2,3)$ so that each side of $S^2$ is
a handlebody orbifold,  we have only nine graphs left which are
listed in  Figure 3. From \cite{Du2}, we also know the fundamental
groups of these nine orbifolds, and  we indicate the groups and
their orders under each graph. Here $O$ denotes the
orientation-preserving symmetry group of the octahedron, and $J$ the
orientation-preserving symmetry group of the icosahedron; all the
symbols are from \cite{Du2}. 

\begin{center}
\scalebox{0.5}{\includegraphics*[0pt,0pt][800pt,400pt]{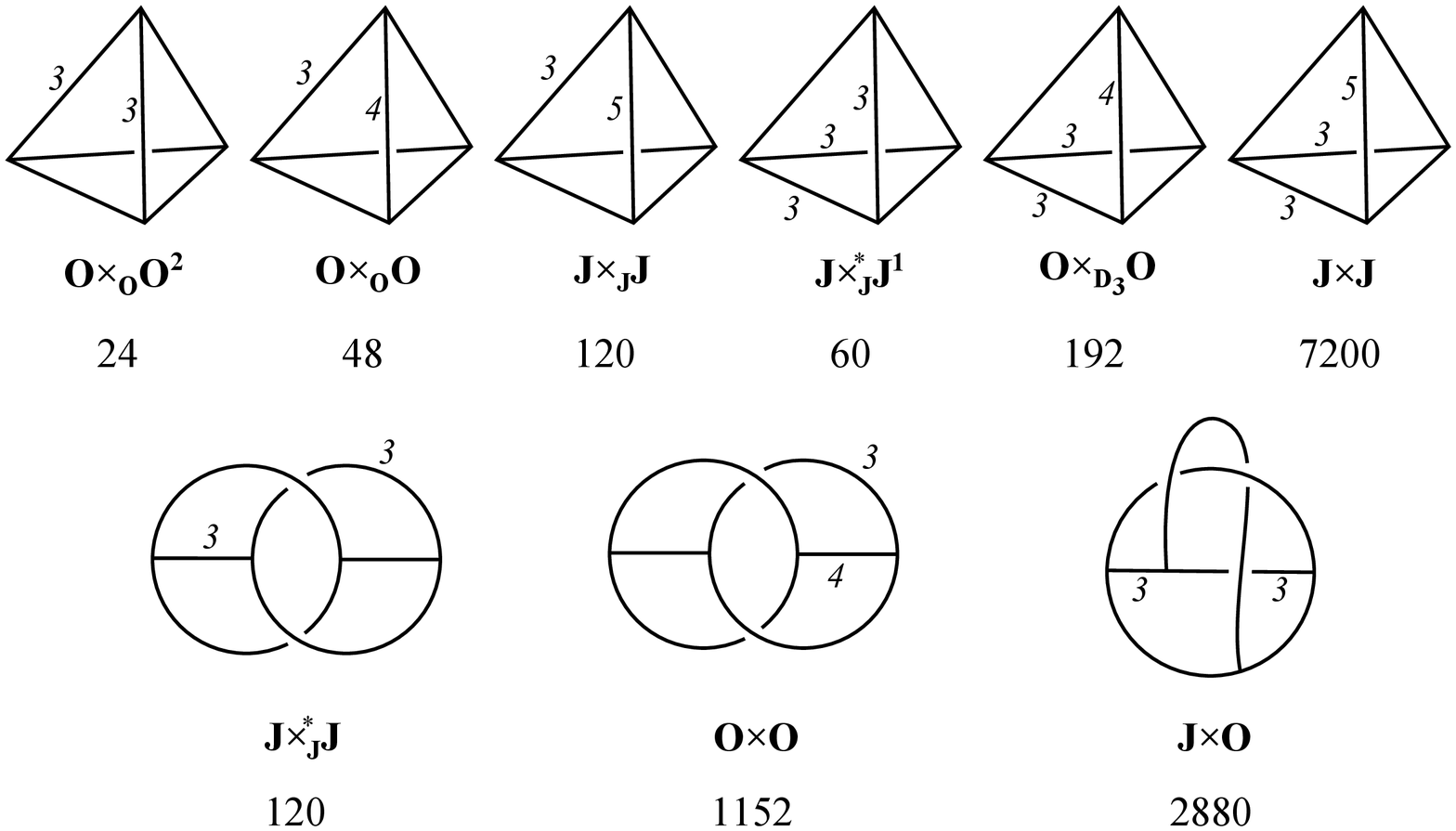}}

Figure 3
\end{center}

From the relation $|G|=12(g-1)$, the cases one finds are $g = 3$, 5,
11, 6, 17, 601, again 11, 97 and 241.

(2) If both $p$ and $q$ are equal to 2, then an easy and more
interesting way to find the cases is as follows. Take any 2-bridge
link $L(\sigma)$ in $S^3$ and associate branching index 3 to each of
its components, obtaining an orbifold $L_3(\sigma)$. It is shown in
\cite{MeZ} that such an orbifold $L_3(\sigma)$ has an
orientation-preserving symmetry group $\Z_2 \times \Z_2$, and the
quotient orbifold is exactly $O(\sigma;2,2)$, for the same 2-bridge
knot defined by $\sigma$. So if $O(\sigma;2,2)$ is spherical, also
$L_3(\sigma)$ is spherical and the singular set is just a 2-bridge
link now. The spherical orbifolds whose singular set is just a link
are listed on pages 89-92 of \cite{Du1}. When we restrict to the two
bridge links such that each component has index 3, then only five
links are left which are listed in Figure 4, and all of them are
very simple torus links.

\begin{center}
\scalebox{0.5}{\includegraphics*[0pt,0pt][800pt,110pt]{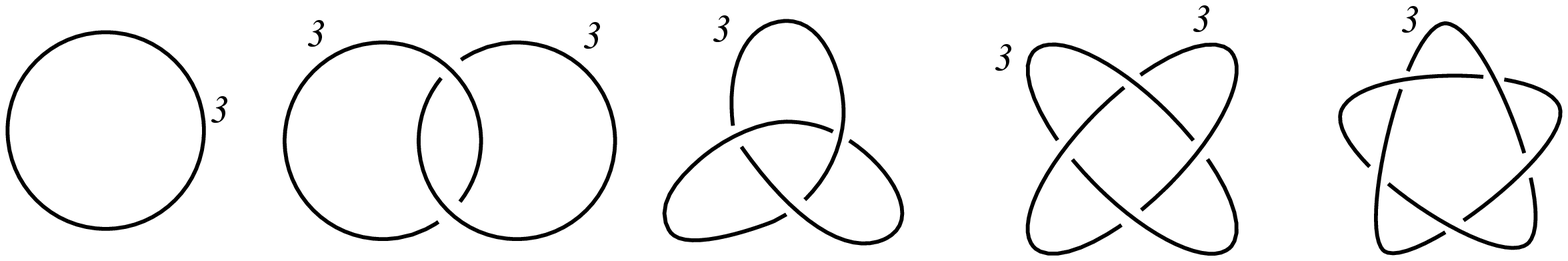}}

Figure 4
\end{center}

Let $\tilde L_3(\sigma)$ be the 3-fold cyclic branched covering of
$L_3(\sigma)$ so that $\tilde L_3(\sigma)$ has no singularity. Then
we have the orbifold covering $$\tilde L_3(\sigma)\to L_3(\sigma)\to
O(\sigma;2,2)$$ of degree 12.

For all $L_3(\sigma)$ in Figure 4, $\tilde L_3(\sigma)$ are
well-known spherical Seifert fiber spaces: The first is $S^3$, the
second the lens space space $L(3,1)$, the third is the quaternion
manifold, the fourth is the 3-manifold with binary tetrahedral
fundamental group, and the fifth is the Poincare homology 3-sphere
(indeed these five 3-manifolds are exactly the $k$-fold cyclic
branched covering over the trefoil knot, for $k = 1$, 2, 3, 4 and 5;
see \cite{Ro}, pp 304-309 for a discussion of the cyclic branched
coverings of the trefoil). The fundamental groups of these orbifolds
$\tilde L_3(\sigma)$  have orders 1, 3, 8, 24 and 120, so for the
corresponding orbifolds  $O(\sigma;2,2)$ one has orders 12, 36, 96,
288 and 1440;  applying again $|G|=12(g-1)$ we obtain the genera 2,
4, 9, 25 and  121.
\end{proof}



We end this section by a lemma which should be useful to study
Question 1, and has also some applications in Section 4.

\begin{lemma}\label{connected} Suppose a  group $G$ acts on $(M, F)$
where  $M$ is a  3-manifold and $F\subset M$ a surface, so we have
the diagrams:
$$\xymatrix{
  F \ar[d]_{p} \ar[r]^{i} & M \ar[d]^{p}
  & \pi_1(F) \ar[d]_{p_*} \ar[r]^{i_*} & \pi_1(M) \ar[d]^{p_*} \\
  F/G  \ar[r]^{\hat{i}} & M/G
  & \pi_1(F/G)  \ar[r]^{\hat{i}_*} & \pi_1(M/G)
  }
$$
Suppose  $F/G$ is connected. Then $F$ is connected if
$$\hat{i}_*(\pi_1(F/G))\cdot p_*(\pi_1(M))=\pi_1(M/G).$$
\end{lemma}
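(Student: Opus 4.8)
The plan is to argue by contradiction using covering-space theory applied to the quotient map $p: M \to M/G$. Suppose $F$ is disconnected; I want to contradict the hypothesis $\hat{i}_*(\pi_1(F/G))\cdot p_*(\pi_1(M))=\pi_1(M/G)$. The key point is that the orbifold quotient map $p: M \to M/G$ is (the orbifold version of) a covering, and the image $p_*(\pi_1(M))\subset \pi_1(M/G)$ is the subgroup whose index equals the number of sheets, i.e. $[\pi_1(M/G):p_*(\pi_1(M))]=|G|$ away from singularities, but more relevantly, the cosets of $p_*(\pi_1(M))$ in $\pi_1(M/G)$ correspond bijectively to the points of a single fiber $p^{-1}(\bar x)$ once we fix a basepoint; a loop $\gamma$ in $M/G$ sends the basepoint of the fiber to the endpoint of its lift, and this endpoint depends only on the coset $\gamma\cdot p_*(\pi_1(M))$.

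First I would reduce to the combinatorial statement: the components of $F$ are permuted transitively by $G$ if and only if $F$ is connected, since $F/G$ is connected; so $F$ connected $\iff$ $G$ acts transitively on the fiber-like set of components. Fix a basepoint $\bar x \in F/G$ and a lift $x\in F\subset M$. The components of $F$ are in bijection with the orbits... more precisely, I would identify $\pi_0(F)$ with the quotient of $p^{-1}(\bar x)\cap F$ by the stabilizer data, but the cleanest route: a component of $F$ is reached from the component of $x$ by following a lift of some loop in $M/G$ that happens to lie in the image $\hat i_*(\pi_1(F/G))$ (because such a loop lifts to a path in $F$). So the set of components of $F$ that meet the fiber over $\bar x$ is indexed by the double cosets $p_*(\pi_1(M))\backslash \pi_1(M/G) / \hat i_*(\pi_1(F/G))$ — or rather, the orbit of the component of $x$ under "$G$ followed by staying inside $F$" corresponds to the subgroup generated appropriately. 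Concretely: $F$ is connected iff every point of $p^{-1}(\bar x)$ can be joined to $x$ by a path inside $F$, and a path in $M$ from $x$ to another point $x'$ of the fiber projects to a loop in $M/G$; conversely every loop in $M/G$ lifts to a path in $M$ starting at $x$, and this path lies in $F$ precisely when the loop is in $\hat i_*(\pi_1(F/G))$ up to the ambiguity measured by $p_*(\pi_1(F))$.

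The main obstacle is bookkeeping the basepoints and the interaction between the horizontal maps $i_*$ and the vertical maps $p_*$ in the given square — in particular one must be careful because $\pi_1(M/G)$ is really an orbifold fundamental group and $p_*(\pi_1(M))$ has index $|G|$ in it. I would make the argument rigorous as follows: the points of the fiber $p^{-1}(\bar x)$ are in bijection with the right cosets $p_*(\pi_1(M))\backslash \pi_1(M/G)$; under this bijection, two points lie in the same component of $F$ if the corresponding cosets differ by (right multiplication by) an element of $\hat i_*(\pi_1(F/G))$, since such an element is represented by a loop in $F/G$ whose lift to $M$ stays in $F$ and connects the two points. Therefore $\pi_0$ of the part of $F$ over $\bar x$ is $p_*(\pi_1(M))\backslash \pi_1(M/G)/\hat i_*(\pi_1(F/G))$, and since $F/G$ is connected every component of $F$ meets this fiber, so this double-coset space is exactly $\pi_0(F)$. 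Hence $F$ is connected iff that double coset space is a single point iff $p_*(\pi_1(M))\cdot\hat i_*(\pi_1(F/G))=\pi_1(M/G)$, which is the stated hypothesis (the product of the two subgroups, in either order, equals the whole group). This completes the proof; I should remark that the hypothesis is sufficient but in general not necessary only if one does not assume $F/G$ connected, whereas here with $F/G$ connected it is exactly the criterion.
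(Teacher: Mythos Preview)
Your argument is correct and in fact establishes a little more than the lemma claims: by identifying the components of $F$ with the double-coset space $p_*(\pi_1(M))\backslash \pi_1(M/G)/\hat{i}_*(\pi_1(F/G))$ via the monodromy action on the fiber $p^{-1}(\bar x)$, you obtain the criterion as an if-and-only-if (once $F/G$ is connected). Two small points worth making explicit in a clean write-up: (i) the fiber $p^{-1}(\bar x)$ lies entirely in $F$ because $F$ is $G$-invariant, so any lift of a loop in $F/G$ starting at \emph{any} fiber point, not just the chosen $x$, stays in $F$; and (ii) the two orders of multiplication $\hat{i}_*(\pi_1(F/G))\cdot p_*(\pi_1(M))$ and $p_*(\pi_1(M))\cdot\hat{i}_*(\pi_1(F/G))$ agree because $p$ is a regular (orbifold) covering and hence $p_*(\pi_1(M))$ is normal.

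The paper's proof takes a genuinely different, purely algebraic route. It argues by contradiction: if $F$ is disconnected, choose a component $F_1$ with stabilizer $G_1\subsetneq G$, observe $F_1/G_1=F/G$, and then bound the index
\[
[\hat{i}_*(\pi_1(F/G))\cdot p_*(\pi_1(M)):p_*(\pi_1(M))]\le |G_1|
\]
through a chain of inequalities coming from the second isomorphism theorem together with $\hat{i}_*p_*=p_*i_*$ (so that $\hat{i}_*p_*(\pi_1(F_1))\subset p_*(\pi_1(M))\cap \hat{i}_*(\pi_1(F/G))$). Since $[\pi_1(M/G):p_*(\pi_1(M))]=|G|>|G_1|$, the product is proper. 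Your monodromy/double-coset argument is more geometric and yields the exact count $|\pi_0(F)|$ rather than an inequality; the paper's index computation is shorter on the page and never mentions lifts or fibers explicitly. Both are valid, and yours has the bonus of giving the converse for free.
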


\begin{proof} Suppose  $F$ is not connected.  Let $F_1\subseteq F$ be a
component of $F$ and $G_1$ its stabilizer in $G$, that is $G_1=\{
h\in G | h(F_1)=F_1\}$. Then $F_1/G_1=F/G$. Now
$|\pi_1(M/G):p_*(\pi_1(M))|=|G|$, and
\begin{eqnarray*}
&&|\hat{i}_*(\pi_1(F/G))\cdot p_*(\pi_1(M)): p_*(\pi_1(M))| \\
&= &|\hat{i}_*(\pi_1(F/G))\cdot p_*(\pi_1(M))/ p_*(\pi_1(M))|\\
&= &|\hat{i}_*(\pi_1(F/G))/\hat{i}_*(\pi_1(F/G))\cap p_*(\pi_1(M))|\\
&\leq &|\hat{i}_*(\pi_1(F/G)):\hat{i}_*p_*(\pi_1(F_1))|\\
&= &|\pi_1(F/G))/ker\hat{i}_*:p_*(\pi_1(F_1))\cdot
ker\hat{i}_*/ker\hat{i}_*|\\  &= &|\pi_1(F_1/G_1)):p_*(\pi_1(F_1))\cdot
ker\hat{i}_*|\\
&\leq &|\pi_1(F_1/G_1)):p_*(\pi_1(F_1))|\\
&= &|G_1|< |G|.
\end{eqnarray*}
Hence $\hat{i}_*(\pi_1(F/G))\cdot p_*(\pi_1(M))\subsetneqq
\pi_1(M/G)$.
\end{proof}

\noindent\textbf{Remark} In Lemma \ref{connected}, if $M$ is $S^3$,
then $F$ is connected if $\hat{i}_*$ is surjective.

\section{Maximum orders of extendable abelian and cyclic groups }

To get the maximum orders $AE_g$ and $CE_g$,  we first need some
information about actions of abelian groups and cyclic groups on
handlebodies which are contained in the following Theorem
\ref{abelian1} and Theorem \ref{cyclic2}.  Some facts in Theorem
\ref{abelian1} and Theorem \ref{cyclic2} have been either explicitly
or implicitly stated, with or without proofs, in \cite{MMZ},
\cite{MeZ}. For our later applications, we reorganize them into our
statement and provide  proofs.

 We define two actions of a finite
group $G$ to be {\it equivalent} if the corresponding groups of
homeomorphisms of $V_g$ are conjugate (i.e., allowing isomorphisms
of $G$).

\begin{theorem}\label{abelian1}
The largest order of a finite abelian group $G$ acting on the
handlebody $V_g$ of genus $g \ge 2$ is $2(g+1)$ if $g \ne 5$, and 16
if $g=5$. The  groups $G$ which realize the maximum orders are $\Z_2
\times \Z_{g+1}$ if $g \ne 5$,  $(\Z_2)^4$ if $g=5$, and in addition
$(\Z_2)^3$ if $g=3$. Moreover,

 (i) there is one equivalence
class for each of the groups $\Z_2 \times \Z_{g+1}$ and $(\Z_2)^4$
whereas there are three equivalence classes for the group $(\Z_2)^3$
acting on $V_3$;

(ii) no abelian group of order larger than 12 acts on $V_5$ except
$(\Z_2)^4$.
\end{theorem}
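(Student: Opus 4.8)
The plan is to read everything off the handlebody orbifold theory recalled above. Suppose $G$ is abelian and acts on $V_g$, with associated graph of finite groups $\G$ and surjection $\phi\colon\pi_1\G\twoheadrightarrow G$ whose kernel is free of rank $g$, so that $g-1=|G|\,(-\chi\G)$. Since $\phi$ is injective on each vertex group, every $G_v$ is an abelian finite subgroup of $SO(3)$, hence cyclic $\Z_n$ or the Klein four--group $\Z_2^2$. A nontrivial edge group is cyclic and maximal cyclic in the adjacent vertex group(s); because $\Z_n$ is its own unique maximal cyclic subgroup and the edge group of a non--loop edge may not coincide with a vertex group, the only nontrivial non--loop edge groups are copies of $\Z_2$ joining two $\Z_2^2$--vertices. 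The crucial point is that a loop can carry no nontrivial edge group in the abelian case: the two gluing disk orbifolds at a loop sit at two distinct cone points of the boundary orbifold $S^2/G_v$, so the two edge--inclusions hit distinct maximal cyclic subgroups of $G_v$; since $\phi$ factors through $\pi_1\G^{\mathrm{ab}}$, in which those two subgroups are identified, injectivity of $G_v\to\pi_1\G^{\mathrm{ab}}\to G$ would fail. In particular, if $\G$ is a tree then $\pi_1\G^{\mathrm{ab}}$ is finite and this bounds $|G|$.

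If $-\chi\G\ge\tfrac12$ then $|G|=(g-1)/(-\chi\G)\le 2(g-1)<2(g+1)$, so the whole content of the theorem lies in the range $0<-\chi\G<\tfrac12$. Writing $\chi\G=\sum 1/|G_v|-\sum 1/|G_e|$ and letting $a$ be the number of $\Z_2^2$--vertices, I would give a short counting argument --- using connectedness of the underlying graph, the fact that each cyclic vertex (of order $\ge2$) must be attached by a trivial edge and so contributes a full $1$, the inequality (number of $\Z_2$--edges) $\le b_1+a-1$ with $b_1$ the first Betti number of the underlying graph, and the non--existence of $\Z_2$--loops --- to show that $0<-\chi\G<\tfrac12$ forces $\G$ to be a tree of exactly one of three shapes: (A) two cyclic vertices $\Z_m,\Z_n$ joined by a trivial edge, with $\tfrac12<\tfrac1m+\tfrac1n<1$; (B) a $\Z_n$--vertex, $n\in\{2,3\}$, joined to a $\Z_2^2$--vertex by a trivial edge; (C) a path of three $\Z_2^2$--vertices joined by two $\Z_2$--edges. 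In each case $\pi_1\G^{\mathrm{ab}}$ is explicit --- $\Z_m\oplus\Z_n$, $\Z_n\oplus\Z_2^2$, $\Z_2^4$ respectively --- $G$ must be a quotient of it that is injective on all vertex groups, and $g$ is then determined by $g-1=|G|(-\chi\G)$. This reduces the statement to a finite check.

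Carrying out that check: in case (A) one always finds $|G|\le 2(g+1)$, with equality exactly for $\{m,n\}=\{2,g+1\}$, which is the action of $\Z_2\times\Z_{g+1}$ on a handlebody neighbourhood of the graph with two vertices and $g+1$ edges (this group is $\Z_{2(g+1)}$ when $g$ is even); in case (B) one gets $\Z_2^3$ on $V_3$ (order $8=2(3+1)$) and $\Z_2^2\times\Z_3$ on $V_6$ (order $12<14$); in case (C) one gets $\Z_2^4$ on $V_5$ (order $16$), $\Z_2^3$ on $V_3$, and $\Z_2^2$ on $V_2$. Hence $|G|\le 2(g+1)$ for every $g\ne5$, attained by $\Z_2\times\Z_{g+1}$ (and, when $g=3$, also by $\Z_2^3$), while for $g=5$ the maximum is $16$: no configuration yields a group of order $>16$ on $V_5$, and the only one yielding order $16$ is (C), where $|\pi_1\G^{\mathrm{ab}}|=16$ forces $G=\Z_2^4$. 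The same classification also proves (ii): it shows that no abelian group of order $13$, $14$ or $15$, and no group of order $16$ other than $\Z_2^4$, acts on $V_5$.

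For the equivalence--class count (i) I would go back through the classification and count pairs $(\G,\phi)$ modulo conjugacy together with $\mathrm{Aut}(G)$. For $\Z_2\times\Z_{g+1}$ with $g\ne5$ only configuration (A) with $\{m,n\}=\{2,g+1\}$ occurs and $\phi$ is forced up to automorphisms, so there is one class; likewise only (C) occurs for $\Z_2^4$ on $V_5$, giving one class. For $\Z_2^3$ acting on $V_3$ both (B) with $n=2$ and (C) occur, and on (C) the admissible quotients $\Z_2^4/\langle w\rangle$ --- those with $w$ in none of the three coordinate $\Z_2^2$'s --- fall into two orbits under the automorphism group of the configuration, for $1+2=3$ classes in total. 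The step I expect to be the genuine obstacle is the classification of abelian--compatible graphs of finite groups with $-\chi\G<\tfrac12$, and in particular the loop and edge--group bookkeeping behind it (why nontrivial loop edge groups are impossible for abelian $G$, and why positive first Betti number of $\G$ already forces $-\chi\G\ge\tfrac12$); once the list (A), (B), (C) is in hand everything else is a finite, if somewhat tedious, verification, with the class count for $\Z_2^3$ on $V_3$ the one further point needing care.
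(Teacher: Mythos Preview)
Your overall strategy and your final list (A), (B), (C), the finite check, and the equivalence--class count for (i) all agree with the paper's proof. But the lemma you single out as ``the crucial point'' is false as stated: it is \emph{not} true that a loop can carry no nontrivial edge group in the abelian case. Your argument works only at a $(\Z_2)^2$--vertex, where the three cone points of $S^2/(\Z_2)^2$ carry three \emph{distinct} $\Z_2$'s, so the HNN--generator of a $\Z_2$--loop identifies two different subgroups in the abelianization and kills injectivity (this is exactly the case the paper excludes by abelianness). At a cyclic vertex $\Z_n$, however, the two cone points of $S^2/\Z_n$ have the \emph{same} isotropy group $\Z_n=G_v$, so both edge--inclusions of a $\Z_n$--loop land in the same maximal cyclic subgroup and your injectivity argument does not apply. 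Such loops genuinely occur for abelian $G$: Theorem~\ref{cyclic2}(4) in this very paper exhibits, for every $g$, cyclic $\Z_{2g-2}$--actions on $V_g$ whose graph of groups is $\Gamma(\Z_2,1,\Z_n)$ with an additional $\Z_n$--loop at the $\Z_n$--vertex.

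The reason these loops do not enter the classification is the Euler--characteristic bound, not abelianness: removing a $\Z_n$--loop raises $\chi$ by $1/n$, and once it is removed the paper's counting shows that either the remaining graph already has $-\chi\ge 1/2$, or it is the single vertex $\Z_n$ (giving $-\chi(\Gamma)=0$), or it is $\Gamma(\Z_{n_1},1,\Z_{n_2})$, in which case adding the loop back gives $-\chi=1-1/n_i\ge 1/2$. Relatedly, your parenthetical claim that ``positive first Betti number already forces $-\chi\ge\tfrac12$'' is also false without the abelian input: a single $(\Z_2)^2$--vertex with a $\Z_2$--loop has $b_1=1$ and $-\chi=1/4$; this graph is eliminated only by your (correct) abelianness argument for $(\Z_2)^2$--loops, not by Euler characteristic. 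So the bookkeeping must keep the two mechanisms separate: $\Z_2$--loops at $(\Z_2)^2$--vertices are killed by abelianness, while $\Z_n$--loops at cyclic vertices are killed by $-\chi<1/2$. With that correction your argument goes through and coincides with the paper's.
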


\begin{proof}

Suppose that $G$ is an abelian group as in Theorem \ref{abelian1}.
Then $g \ge 2$ implies $-\chi > 0$ by (2.2). Also, every vertex
group of $\G$ is either cyclic or isomorphic to the dihedral group
$(\Z_2)^2$ of order 4, since these are the finite abelian subgroups
of $SO(3)$; then the group of every edge which is not a loop is
either trivial or $\Z_2$, and in the second case the two adjacent
vertex group are $(\Z_2)^2$.

\medskip

Note that, if $|G| \ge 2g-1$, then  $-\chi = (g-1)/|G| \le
(g-1)/(2g-1) < 1/2$. We will assume that $-\chi < 1/2$ in the
following and discuss all possibilities for $\G$ and $G$. The
discussion is divided into two cases:

\medskip

(I) Suppose first that $\G$ has no vertex group $(\Z_2)^2$; then all
vertex groups are cyclic. Let $$E=\{\text{edge } e\in \Gamma | G_e
\text{ is non-trivial}\};$$ then all edges in $E$ must be loops. Let
$\Gamma_0=\Gamma-E$; if we view $\Gamma_0$ as a usual graph, we
denote it by $|\Gamma_0|$. It is easy to see
$-\frac{1}{2}<\chi(\Gamma)\leq\chi(\Gamma_0)\leq\chi(|\Gamma_0|)$.
However $\chi(|\Gamma_0|)$ is an integer, so $\chi(|\Gamma_0|)=1$ or
$0$. Suppose $\Gamma_0$ has $k$ non-trivial vertices; then
$\chi(\Gamma_0)\leq\chi(|\Gamma_0|)-\frac{k}{2}$. So if
$\chi(|\Gamma_0|)=0$, it is easy to see $\chi(\Gamma)\notin
(-\frac{1}{2},0)$. So we must have $\chi(|\Gamma_0|)=1$, which means
$|\Gamma_0|$ must be a tree. Notice that every end (degree-one
vertex) of $\Gamma_0$ must be non-trivial, hence the same reason as
above shows that $\Gamma_0$ is equal to some
$\Gamma(\Z_{n_1},1,\Z_{n_2})$ (consisting of one edge with trivial
edge group, and two vertices). Furthermore, $(n_1,n_2)\in
\{(3,5),(3,4),(3,3),(2,n)\}$, $n\geq3$, and $\Gamma$ must equal to
$\Gamma_0$. So the only possibilities for $\Gamma$ are the graphs of
groups $\Gamma(\Z_2,1,\Z_n)$, with $-\chi = (n-2)/2n$, and
$\Gamma(\Z_3,1,\Z_n)$, for $n = 3$, 4 or 5, with $-\chi = 1/3$, 5/12
or 7/15. The only finite abelian groups onto which the free product
$\pi_1\Gamma(\Z_2,1,\Z_n) \cong  \Z_2 * \Z_n$ surjects with
torsionfree kernel are the groups $\Z_2 \times \Z_n$ and, if $n$ is
even, $\Z_n$. In the first case we have $g = n-1$ and $|G| = 2n =
2(g+1)$, so for all genera $g$ the order $2(g+1)$ of the Theorem is
achieved for the group $\Z_2 \times \Z_{g+1}$. In the three other
cases, the possibilities for $G$ are the groups $\Z_3$ and
$(\Z_3)^2$, $\Z_{12}$ or $\Z_{15}$, and in each of these cases one
has $|G| < 2(g+1)$.

\medskip

(II) Suppose now that $\G$ has some vertex group  $(\Z_2)^2$. Let
$$E=\{\text{edge } e\in \Gamma | G_e \text{ is non-trivial, and the
ends of $e$ are both cyclic groups}\};$$ also all edges in $E$ must
be loops. Setting $\Gamma_0=\Gamma-E$, every non-trivial edge in
$\Gamma_0$ must have both ends $(\Z_2)^2$. We now also have
$-\frac{1}{2}<\chi(\Gamma)\leq\chi(\Gamma_0)\leq\chi(|\Gamma_0|)$.
We explain the last inequality: suppose $\Gamma_0$ has $l$ vertices
of type $(\Z_2)^2$; then $\Gamma_0$ has no more than $\frac{3l}{2}$
non-trivial edges, so
$\chi(\Gamma_0)\leq\chi(|\Gamma_0|)-\frac{3}{4}l+\frac{1}{2}\cdot
\frac{3l}{2}=\chi(|\Gamma_0|)$. We also have $\chi(|\Gamma_0|)=1$ or
$0$. Now for a cyclic end, $\chi(\Gamma_0)$ must decrease at least
by $-\frac{1}{2}$ compared with $\chi(|\Gamma_0|)$. For a $(\Z_2)^2$
end, $\Gamma_0$ has no more than $\frac{3l-2}{2}$ non-trivial edges,
and this time
$\chi(\Gamma_0)\leq\chi(|\Gamma_0|)-\frac{3}{4}l+\frac{1}{2}\cdot
\frac{3l-2}{2}=\chi(|\Gamma_0|)-\frac{1}{2}$, so it also decrease at
least by $-\frac{1}{2}$. This argument show that $\Gamma_0$ has at
most $2$ ends.

(1) If $\chi(|\Gamma_0|)=0$, $\Gamma_0$ can even have no ends at
all, so it is a loop divided by some $(\Z_2)^2$ vertices. The only
possibility for $\G$ is  a graph of groups with exactly one vertex
and one edge (a loop), with $\chi = -1/4$; however, since the
HNN-generator of $\pi_1\G$ corresponding to the loop has to
conjugate a subgroup $\Z_2$ of the vertex group $(\Z_2)^2$ into a
different subgroup $\Z_2$ (see \cite{MMZ} or \cite{Zi3}), its
fundamental group does not surject onto an abelian group and this
case does not occur.

(2) If $\chi(|\Gamma_0|)=1$, $\Gamma_0$ is a segment. The segment
may have inner vertices, but every inner vertex must be $(\Z_2)^2$.
For every such inner vertex, $\Gamma_0$ has no more than
$\frac{3l-1}{2}$ non-trivial edges, and this time
$\chi(\Gamma_0)\leq\chi(|\Gamma_0|)-\frac{3}{4}l+\frac{1}{2}\cdot
\frac{3l-1}{2}=\chi(|\Gamma_0|)-\frac{1}{4}$. So there is at most
one inner vertex. If there is no inner vertex, it is easy to see
that $\G$ is equal to $\Gamma((\Z_2)^2,1,\Z_2)$, with $-\chi = 1/4$,
or to $\Gamma((\Z_2)^2,1,\Z_3)$, with $-\chi = 5/12$; the
possibilities for $G$ are the groups $(\Z_2)^2$ and $(\Z_2)^3$ in
the first case, and $\Z_2 \times \Z_6$ in the second one, and only
for the group $(\Z_2)^3$, with $g = 3$, the bound $|G| = 2(g+1)$ of
the Theorem is obtained. The last case is the graph of groups
$\Gamma((\Z_2)^2, \Z_2, (\Z_2)^2, \Z_2, (\Z_2)^2)$, with two edges
and three vertices and $-\chi = 1/4$, whose fundamental group
surjects onto $(\Z_2)^n$ for $n = 2$, 3 and 4; the group $(\Z_2)^4$
realizes the maximum order 16 for $g = 5$ of the Theorem, whereas
the group $(\Z_2)^3$ realizes again the maximum order $8 = 2(g+1)$
for $g = 3$

There is only one finite-injective surjective map from $\Z_2 \times
\Z_{g+1}$ or from $\Gamma((\Z_2)^2, \Z_2, (\Z_2)^2, \Z_2, (\Z_2)^2)$
to an abelian group. There are two finite-injective surjective map
from $\Gamma((\Z_2)^2, \Z_2, (\Z_2)^2, \Z_2, (\Z_2)^2)$ to
$(\Z_2)^3$  (either all three vertex groups $(\Z_2)^2$ are mapped to
different subgroups of $(\Z_2)^3$, or two vertex groups are mapped
to the same subgroup), and one finite-injective surjective map from
$\Gamma((\Z_2)^2,1,\Z_2)$ to $(\Z_2)^3$. These show the result in
(i). For (ii), an abelian group of order $13$, $14$ or $15$ must be
a cyclic group, and the above orbifolds can not finite-injectively
surject to such a cyclic group when $g=5$.

This completes the proof of Theorem \ref{abelian1}.
\end{proof}

For cyclic groups, the proof of Theorem \ref{abelian1} implies also
the following:

\begin{theorem}\label{cyclic2} Let  $G$ be a finite cyclic group acting
on a handlebody  of genus $g\ge 2$. If $G$ has order at least $2g-2$
then $G$ is one the following groups:
\setlength{\parskip}{\smallskipamount}

(1) $\Z_{2g+2}$ if $g$ is even, associated to a surjection $\Z_2
* \Z_{g+1} \to \Z_{2(g+1)}$;

(2) $\Z_{2g}$ for all $g$, associated to a surjection $\Z_2 *
\Z_{2g} \to \Z_{2g}$ and, for $g=6$, also to $\Z_3 * \Z_4 \to
\Z_{12}$;

(3) $\Z_{2g-1}$ for $g=2$ and 8, associated to surjections $\Z_3 *
\Z_3 \to \Z_3$ and $\Z_3 * \Z_5 \to \Z_{15}$;

(4) $\Z_{2g-2}$ for all $g$;  for each $g$ the graphs of groups are
$\Gamma(\Z_2,1,\Z_n)$ with an additional loop with edge group $\Z_n$
attached to the vertex of type $\Z_n$, for each $n \ge 1$ which
divides $2g-2$;  in addition, for $g=3$ and 2 there are actions
associated to  $\Z_4 *\Z_4 \to \Z_4$ and $\Z_2 * \Z_2 * \Z_2 \to
\Z_2$.
\end{theorem}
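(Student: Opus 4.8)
The plan is to re-run the case analysis from the proof of Theorem~\ref{abelian1}, now with $G$ cyclic and under the weaker hypothesis $|G|\ge 2g-2$. Since $\phi$ is injective on the vertex groups and $G$ is cyclic, every vertex group is cyclic, so there are no dihedral vertices and we stay entirely in case (I) of that proof: the edges with nontrivial edge group are loops whose edge group is the whole (cyclic) vertex group. The hypothesis is equivalent to $-\chi=(g-1)/|G|\le 1/2$, and this equals $1/2$ precisely when $|G|=2g-2$. For $-\chi<1/2$ — that is, $|G|\ge 2g-1$ — the discussion is word for word that of case (I) of Theorem~\ref{abelian1}: $\Gamma$ is a segment $\Gamma(\Z_m,1,\Z_n)$ with $1/m+1/n>1/2$, the relevant pairs being $(2,n)$ with $n\ge 3$ and $(3,3),(3,4),(3,5)$, and reading off the cyclic quotients gives items (1)--(3) (the sporadic small-genus instances $g=2,6,8$ coming from the pairs $(3,3),(3,4),(3,5)$). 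So the genuinely new work is confined to the boundary case $-\chi=1/2$, $|G|=2g-2$, which produces item (4).

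For $-\chi=1/2$ I would enumerate the graphs of groups $\G$ with cyclic vertex groups as in Theorem~\ref{abelian1}(I), now retaining equality: delete the loops with nontrivial edge group to get $\Gamma_0$, observe $\chi(|\Gamma_0|)\in\{0,1\}$, that every nontrivial vertex drops the Euler characteristic by at least $1/2$ and every degree-one vertex of $\Gamma_0$ is nontrivial, and track the remaining slack. This should leave: the segments $\Gamma(\Z_m,1,\Z_n)$ with $1/m+1/n\ge 1/2$ (the new ``tight'' ones being $(3,6)$ and $(4,4)$); the graph $\Z_2*\Z_2*\Z_2$; and the segments $\Gamma(\Z_2,1,\Z_n)$, which still have slack $1/n$ and may therefore carry exactly one extra loop of edge group $\Z_n$ at the $\Z_n$ vertex — after collapsing a trivial vertex, exactly the graphs named in item (4).

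Next, for each candidate $\G$ I would read off which cyclic groups admit a surjection $\pi_1\G\to G$ with torsion-free kernel. With trivial edge groups such a surjection forces every vertex group to embed in $G$, so $|G|$ is a multiple of the lcm of the vertex-group orders; in the loopless case this pins $G$ down to that lcm, while an HNN loop contributes a free generator and hence permits $G$ to be any cyclic group of order a multiple of that lcm. Combined with $g-1=|G|(-\chi)=|G|/2$ this forces $|G|=2g-2$ in every boundary case, $G\cong\Z_{2g-2}$, and — for the family $\Gamma(\Z_2,1,\Z_n)$-plus-$\Z_n$-loop — the divisibility constraint $n\mid 2g-2$; the loopless boundary segments $\Z_4*\Z_4$ ($g=3$), $\Z_2*\Z_2*\Z_2$ ($g=2$) and $\Gamma(\Z_3,1,\Z_6)$ ($g=4$) account for the remaining actions of order $2g-2$.

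The step I expect to be the main obstacle is exactly this boundary case. Two things need care. First, one must check — as was necessary in Theorem~\ref{abelian1}(II)(1) — that the HNN relation of each surviving loop does not obstruct a surjection onto a cyclic group: here it does not, because the loop's edge group is the full cyclic vertex group and the identification across the loop can be taken to be the identity; and one must confirm, from the handlebody-orbifold constraints of \cite{MMZ} and \cite{Zi3}, which loop configurations genuinely arise. Second, several of the boundary graphs of groups describe equivalent actions, so one has to reduce them to the canonical forms of item (4) (for instance recognizing $\Gamma(\Z_3,1,\Z_6)$ at $g=4$ as one of the $\Gamma(\Z_2,1,\Z_n)$-plus-loop actions), and keep the low-genus exceptions $g=2,3,6,8$ straight — this cleanup is the only part not already carried out by the proof of Theorem~\ref{abelian1}.
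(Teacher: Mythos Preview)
Your approach is correct and is essentially the paper's own: observe that cyclic $G$ forces all vertex groups to be cyclic (so only case~(I) of the proof of Theorem~\ref{abelian1} is relevant), read off items (1)--(3) from that analysis for $-\chi<1/2$, and then enumerate the extra graphs of groups with $-\chi=1/2$ to obtain item~(4). You are in fact more careful than the paper, whose proof dispatches the boundary case in three lines and does not explicitly mention the $\Gamma(\Z_3,1,\Z_6)$ segment at $g=4$ that you flag for cleanup.
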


\begin{proof}  The cases $|G| \ge 2g-1$ are covered by the proof of
Theorem \ref{abelian1}. For $|G| = 2g-2$ one has to consider in
addition graphs of groups with $-\chi = 1/2$. If $\Gamma_0$ has at
least three non-trivial vertices, it gives the case $\Z_2 * \Z_2 *
\Z_2 \to \Z_2$. Or $\Gamma_0=\Gamma(\Z_{n_1},1,\Z_{n_2})$ and there
are two more cases: $(n_1,n_2)$=$(4,4)$, or $\Gamma$ is obtained
from $\Gamma_0$ by adding a non-trivial loop as stated in (4).
\end{proof}

\begin{theorem}\label{abelian} $AE_g=2g+2.$
\end{theorem}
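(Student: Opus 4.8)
The plan is to establish $AE_g\le 2g+2$ and $AE_g\ge 2g+2$ separately. The lower bound is the easy one: it suffices to exhibit, for every $g$, an abelian group of order $2g+2$ acting on $\Sigma_g$ and extending over $S^3$. The group $\Z_2\times\Z_{g+1}$ — which is exactly the abelian group of maximal order acting on a handlebody $V_g$ in Theorem \ref{abelian1} — does the job: on the standard (unknotted) genus-$g$ Heegaard surface of $S^3$ it is realised by an obvious order-$(g+1)$ rotation together with a flip, with both complementary handlebodies invariant, so the action extends to $(S^3,\Sigma_g)$. This is the content of Example 4.1; since the embedding used there is unknotted, one obtains in fact $AE^o_g=AE_g$.

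For the upper bound, suppose an abelian group $G$ acts on $(S^3,\Sigma_g)$. First I would observe that no element of $G$ interchanges the two complementary regions: the action preserves the orientation of $S^3$ and of $\Sigma_g$, hence the co-orientation of $\Sigma_g$, so writing $S^3=M_1\cup_{\Sigma_g}M_2$ both $M_1$ and $M_2$ are $G$-invariant. Since $S^3$ is irreducible and not Haken (it contains no $2$-sided incompressible closed surface), $\Sigma_g$ compresses in at least one of $M_1,M_2$, say in $M_1$. The key step is a reduction to the handlebody case: by the equivariant Dehn lemma and loop theorem of Meeks and Yau there is a $G$-invariant system of compressing discs for $\Sigma_g$ in $M_1$; compressing along a maximal such system, discarding the sphere components (they bound balls, by irreducibility) and, whenever an incompressible positive-genus remnant survives, repeating the procedure on the other side, the process terminates — each step strictly lowers the total genus — and exhibits $G$ as acting on a handlebody $V_{g'}$ of genus $g'\le g$ whose boundary action, after refilling the compressions, is conjugate to the original action on $\Sigma_g$. (In the simplest case the first compression already yields only spheres, and then $M_1$ itself is a $G$-handlebody of genus $g$.)

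Theorem \ref{abelian1} now applies to the $G$-action on $V_{g'}$ and gives $|G|\le 2(g'+1)\le 2(g+1)=2g+2$, with the only conceivable exception $g'=5$, where by Theorem \ref{abelian1}(ii) an abelian group of order larger than $12$ acting on $V_5$ would have to be $(\Z_2)^4$. But $(\Z_2)^4$ cannot act on $S^3$: by the orbifold geometrization (elliptization) theorem the quotient $S^3/G$ is a spherical orbifold, so $G$ is conjugate to a finite subgroup of $SO(4)$; and commuting involutions in $O(4)$ are simultaneously diagonalisable, so the largest elementary abelian $2$-subgroup of $SO(4)$ is $(\Z_2)^3$, the determinant condition removing one of the four $\Z_2$-factors. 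Hence $|G|\le 12\le 2(g+1)$ in that case too, so $AE_g\le 2g+2$, and together with the lower bound $AE_g=2g+2$.

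The hard part will be the reduction to the handlebody case. A $G$-invariant surface in $S^3$ need not bound a handlebody on either side, and the equivariant compressions may disconnect $\Sigma_g$, with $G$ permuting the resulting pieces — a priori this could leak a constant in the genus bookkeeping. So the delicate point is to arrange the compressions $G$-equivariantly and to deal separately with configurations having a global $G$-fixed point (where $G$ embeds in $SO(3)$ and is therefore cyclic or of order at most $4$), so that the final handlebody still has genus $\le g$. Once this is settled, everything else is just Theorem \ref{abelian1} together with the elementary fact about involutions in $SO(4)$.
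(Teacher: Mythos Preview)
Your lower bound and your exclusion of $(\Z_2)^4$ are exactly what the paper does (the paper states both the Smith-theory argument and your $SO(4)$ argument). The divergence is entirely in the upper bound, and there you have a real gap.

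The paper does not attempt any equivariant compression argument. Instead it invokes a black-box result of Reni--Zimmermann \cite{RZ}: for an \emph{abelian} group $G$, a $G$-action on $\Sigma_g$ extends over \emph{some} compact $3$-manifold $M$ with $\partial M=\Sigma_g$ if and only if it extends over the handlebody $V_g$. Since each complementary region $M_i$ of $\Sigma_g$ in $S^3$ is such an $M$, this gives $AE_g\le AH_g$ in one line, and then Theorem~\ref{abelian1} finishes. The abelian hypothesis is essential in \cite{RZ}; this is not a general fact about finite group actions.

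Your proposed route --- compress equivariantly in $M_1$, switch sides on incompressible remnants, iterate, then ``refill'' --- does not, as written, produce what you need. Equivariant compression of $\Sigma_g$ in $M_1$ yields a $G$-invariant compression body $C\subset M_1$ with $\partial_+C=\Sigma_g$; this is a handlebody only when $\partial_-C=\emptyset$, i.e.\ when the compression is complete. If positive-genus pieces survive and you pass to the other side, you are now compressing a \emph{different} surface, and neither the new compression body nor any ``refilled'' object has boundary $\Sigma_g$. In particular your phrase ``a handlebody $V_{g'}$ of genus $g'\le g$ whose boundary action, after refilling the compressions, is conjugate to the original action on $\Sigma_g$'' is self-contradictory: $\partial V_{g'}=\Sigma_{g'}$, so either $g'=g$ or the boundary action cannot be the original one. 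If instead you only want $G$ to act faithfully on \emph{some} $V_{g'}$ with $g'\le g$ (dropping the boundary condition), the compression procedure still does not hand you such a handlebody --- a compression body with nonempty $\partial_-$ is not one, and the pieces on the far side need not be handlebodies either. You correctly flag this as ``the hard part''; the point is that it is precisely the content of \cite{RZ}, and your sketch does not reprove it.
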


\begin{proof}
Suppose  an abelian group $G$ acts on $ \Sigma_g$ which is
extendable over $S^3$ for some embedding $e: \Sigma_g\hookrightarrow
S^3$. Then the action of $G$ extends to each 3-manifold of
$S^3\setminus \Sigma_g$. According to \cite[Theorem 2]{RZ}, for each
abelian group $G$, the $G$-action on $\Sigma_g$ extends to a compact
$3$-manifold $M$ with $\partial M= \Sigma_g$ if and only if the
$G$-action on $\Sigma_g$ extends to a handlebody $V_g$ with
$\partial V_g= \Sigma_g$. Therefore we have $AE_g\le AH_g$.

It is a general fact of Smith fixed point theory that the finite
$2$-group $(\Z_2)^{(n+1)}$ does not act orientation-preservingly on
a $mod~2$ homology $n$-sphere, see \cite{Sm}. So a $(\Z_2)^4$-action
on $F_5$ does not extend to an action on $S^3$. (Alternatively, by
the confirmation of the geometrization conjecture one can use the
fact that every finite group acting orientation-preservingly on
$S^3$ can be conjugated into $SO(4)$, and $SO(4)$ contains no
subgroup isomorphic to $(\Z_2)^{4}$.) Now applying Theorem
\ref{abelian1}, we have indeed $AE_g\le 2(g+1)$ for each $g>1$.

By Example 4.1, for every $g>1$ there is an abelian group $G\cong
\mathbb{Z}_2\times\mathbb{Z}_{g+1}$ which acts on $\Sigma_g$,  and
this action extends to a $G$-action on $S^3$ for the unknotted
embedding of $\Sigma_g\subset S^3$. Hence $AE^o_g\ge 2(g+1)$.

Then

$$2(g+1)\le AE^o_g\le AE_g\le 2(g+1).$$


So we have $AE_g=AE^o_g= 2(g+1).$ and  Theorem \ref{abelian} is
proved.\end{proof}

The following fact proved without using Smith theory is  of
independent interest.

\begin{lemma}
Some order 2 element of the $(\Z_2)^4$ action on $V_5$ is not
extendable.
\end{lemma}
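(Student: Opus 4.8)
The goal is to show that some involution in the $(\Z_2)^4$-action on $V_5$ does not extend to $S^3$, without invoking Smith theory or geometrization. By Theorem \ref{abelian1}, this action is unique up to equivalence and is the one associated to the graph of groups $\Gamma((\Z_2)^2, \Z_2, (\Z_2)^2, \Z_2, (\Z_2)^2)$ with the corresponding finite-injective surjection $\phi: \pi_1\Gamma \to (\Z_2)^4$. So the first step is to write down this data explicitly: label the three vertex groups $V_0, V_1, V_2$, each a copy of $(\Z_2)^2$, with edge groups $\Z_2 = \langle a \rangle$ between $V_0, V_1$ and $\Z_2 = \langle b \rangle$ between $V_1, V_2$, where $a, b$ are distinct nontrivial elements of $V_1$. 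Then $\pi_1\Gamma = V_0 *_{\langle a\rangle} V_1 *_{\langle b\rangle} V_2$, and $\phi$ sends the three vertex groups onto three distinct $(\Z_2)^2$-subgroups of $(\Z_2)^4$ spanning it; I would fix generators so that, say, $(\Z_2)^4 = \langle x_1, x_2, x_3, x_4\rangle$ with $\phi(V_0) = \langle x_1, x_2\rangle$, $\phi(V_1) = \langle x_2, x_3\rangle$, $\phi(V_2) = \langle x_3, x_4\rangle$, and $\phi(a) = x_2$, $\phi(b) = x_3$.

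**Identifying the candidate involution.** Next I would pin down which element $t \in (\Z_2)^4$ fails to extend. The natural candidate is an element lying in none of the three vertex-group images, e.g. $t = x_1 x_4$ (or $x_1 x_2 x_3 x_4$); such an element acts on $V_5$ freely on the preimage of the interiors of all vertices and can only be nontrivially related to the graph structure through the edges. The key structural point is that the fixed-point set in $V_5$ of an element of $G$ is controlled by the orbifold: $\mathrm{Fix}(t)$ is nonempty iff $t$ is conjugate in $G$ to an element of some vertex group $\phi(V_v)$, and since $G$ is abelian this just means $t \in \bigcup_v \phi(V_v)$. For $t = x_1 x_4$ this union does not contain $t$, so $t$ acts \emph{freely} on $V_5$, hence freely on $\Sigma_5 = \partial V_5$. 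The plan is to show that a free involution on $\Sigma_5$ arising this way cannot extend to an action on $S^3$: an extension would give a free involution on the other side $W = S^3 \setminus \mathrm{int}(V_5)$ as well, hence a free involution $\tau$ on $S^3 = V_5 \cup_{\Sigma_5} W$. But a free involution on $S^3$ has quotient a $\Z_2$-homology sphere with $\pi_1 = \Z_2$; computing $H_*(S^3/\langle\tau\rangle)$ from the Heegaard decomposition — the quotient is $V_5/\langle\tau\rangle \cup W/\langle\tau\rangle$, where $V_5/\langle\tau\rangle$ is again a genus-$5$ handlebody (free $\Z_2$ on a handlebody) glued along $\Sigma_5/\langle\tau\rangle$, a genus-$6$ surface — gives a contradiction with $H_1 = \Z_2$, or more precisely contradicts that a free involution on $S^3$ exists at all with the wrong parity of Euler characteristic of the quotient pieces. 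Actually the cleanest contradiction: $V_5/\langle t\rangle$ is a handlebody $V_h$ with $\chi(\partial V_h) = \chi(\Sigma_5)/2 = -4$, impossible since $\chi$ of a closed surface is even — so in fact the relevant obstruction is that $\Sigma_5/\langle t\rangle$ would be a surface of Euler characteristic $-4$, i.e. genus $3$, which is fine; the real obstruction must come from the $S^3$ side.

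**The main obstacle.** The delicate part is ruling out the extension \emph{directly}, and here I would instead use Lemma \ref{connected} together with the orbifold picture. The argument I anticipate: if the full $(\Z_2)^4$-action extended over $S^3$ with the unknotted embedding, then each side $H_i/G$ would be a handlebody orbifold and $S^3/G$ a spherical orbifold; tracing through, the involution $t = x_1x_4$ lies in no vertex group, so its fixed set in $S^3$ would be empty, making $S^3/\langle t\rangle$ a free $\Z_2$-quotient of $S^3$, which is $\mathbb{R}P^3$ — and then one checks that the induced orbifold structure is incompatible, or alternatively that $(\Z_2)^4 / \langle t \rangle \cong (\Z_2)^3$ would have to act on $\mathbb{R}P^3$ with the stated orbifold quotient, contradicting the handlebody-orbifold classification or a direct homological count. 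The cleanest self-contained route, which I'd try first, is: the extension produces a genus-$5$ Heegaard splitting of $S^3$ invariant under $(\Z_2)^4$; restricting to $\langle t\rangle$ gives a $t$-invariant splitting, and since $t$ is free, $t$ acts freely on the Heegaard surface $\Sigma_5$ and on each handlebody, so $S^3 \to S^3/\langle t\rangle = \mathbb{R}P^3$ restricts to a double cover of a genus-$3$ Heegaard splitting of $\mathbb{R}P^3$; but $\mathbb{R}P^3$ has Heegaard genus $1$ and any genus-$3$ splitting is a stabilization, and one can then derive a contradiction with the structure of $\phi$ restricted to $\langle x_1, x_4\rangle$ and its interaction with the edge data — concretely, that $x_1x_4$ being a free symmetry forces $V_5/\langle x_1 x_4\rangle$ to have first Betti number $9$ while $H_1(V_5/\langle x_1x_4\rangle;\Z_2)$ computed from the graph of groups quotient gives a different answer. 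I expect the genuinely hard step to be this last homological bookkeeping: correctly computing $H_1$ of the quotient handlebody orbifold $V_5/\langle t\rangle$ from the transfer/graph-of-groups data and extracting the contradiction on the $S^3$ side, since the naive Euler characteristic argument alone does not suffice (free actions preserve Heegaard-type decompositions and $\chi$ divides evenly), so the obstruction must be homological rather than combinatorial.
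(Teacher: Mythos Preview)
Your proposal does not give a proof; it is a sequence of partial plans, each abandoned before reaching a contradiction, and you yourself flag the final step as unresolved. There are two concrete gaps.

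First, the claim that an extension of $t=x_1x_4$ to $S^3$ must act freely on the complementary side $W=S^3\setminus\mathrm{int}(V_5)$ is unjustified and in general false. Freeness on $V_5$ and on $\Sigma_5$ says nothing about the extension on $W$: the fixed circle of an order-$2$ rotation of $S^3$ can perfectly well lie entirely in $W$ and miss $V_5$. Establishing that the fixed set, if nonempty, is a circle already uses Smith theory, which you wanted to avoid; and even granting that, you would still need a separate argument to exclude a fixed circle inside $W$.

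Second, even if the extension \emph{were} free, you have not extracted a contradiction. Free involutions on $S^3$ certainly exist (e.g.\ the antipodal map), and $\mathbb{R}P^3$ has genus-$3$ Heegaard splittings (stabilizations of the standard one). Your Euler-characteristic count is consistent, as you noticed; the promised ``homological bookkeeping'' comparing $H_1(V_5/\langle t\rangle)$ computed two ways is never carried out, and it is not clear what inconsistency it would reveal.

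The paper proceeds entirely differently and avoids all of this. It builds an explicit geometric model of the $(\Z_2)^4$-action on $V_5$ as four $3$-balls with eight $1$-handles, writes down four generating involutions $\sigma_1,\sigma_2,\rho_1,\rho_2$ as concrete $\pi$-rotations, and then shows that $\sigma_1$ is not extendable by a linking-number argument: for any embedding $V_5\hookrightarrow S^3$ one constructs a two-component link $\{K_1,K_2\}$ in $V_5$ and checks that the linking number of $\sigma_1(\{K_1,K_2\})$ differs from that of $\{K_1,K_2\}$ by~$1$. Since a homeomorphism of $S^3$ preserves linking numbers, $\sigma_1$ cannot extend. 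Note that the element shown not to extend is \emph{not} your free element $t$; it is one of the generators coming from a vertex group and has fixed points in $V_5$. The argument is elementary, uses only linking numbers, and requires neither Smith theory nor any analysis of the complementary handlebody.
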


\begin{center}
\scalebox{0.35}{\includegraphics*[0pt,0pt][515pt,515pt]{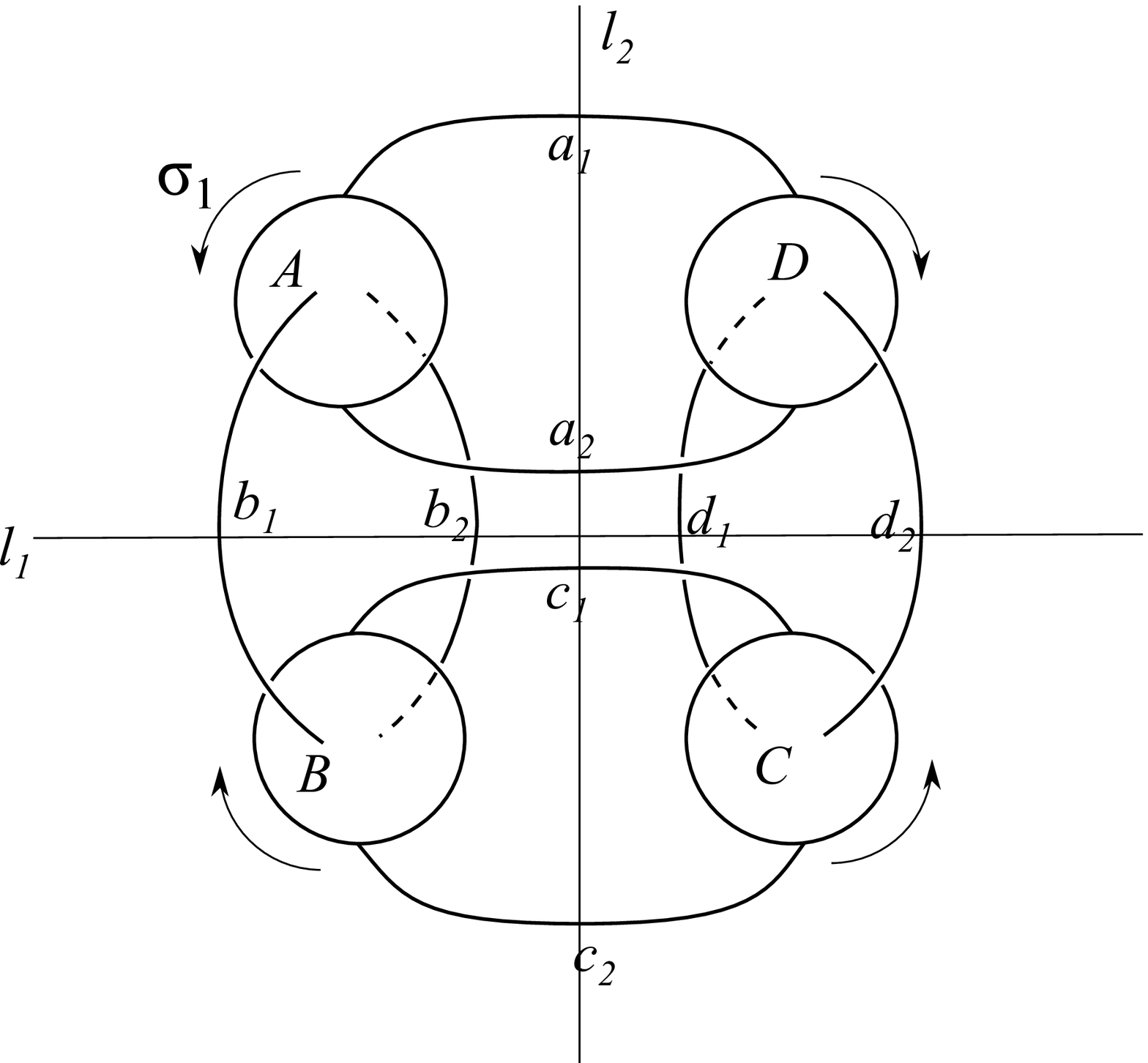}}

Figure 5
\end{center}

\begin{proof}  We first describe a geometric model of the action of
$(\Z_2)^4$  on $V_5$.

As in Figure 5, we view $V_5$ as four $3$-balls $\{A, B, C, D\}$
with $8$ handles $\{a_1, a_2, b_1, b_2, c_1, c_2, d_1, d_2\}$
attached; here, for simplicity, we draw each handle as an arc, and
the four attaching disks on each ball are at front, back, top and
bottom respectively.

\begin{center}
\scalebox{0.35}{\includegraphics*[0pt,0pt][950pt,410pt]{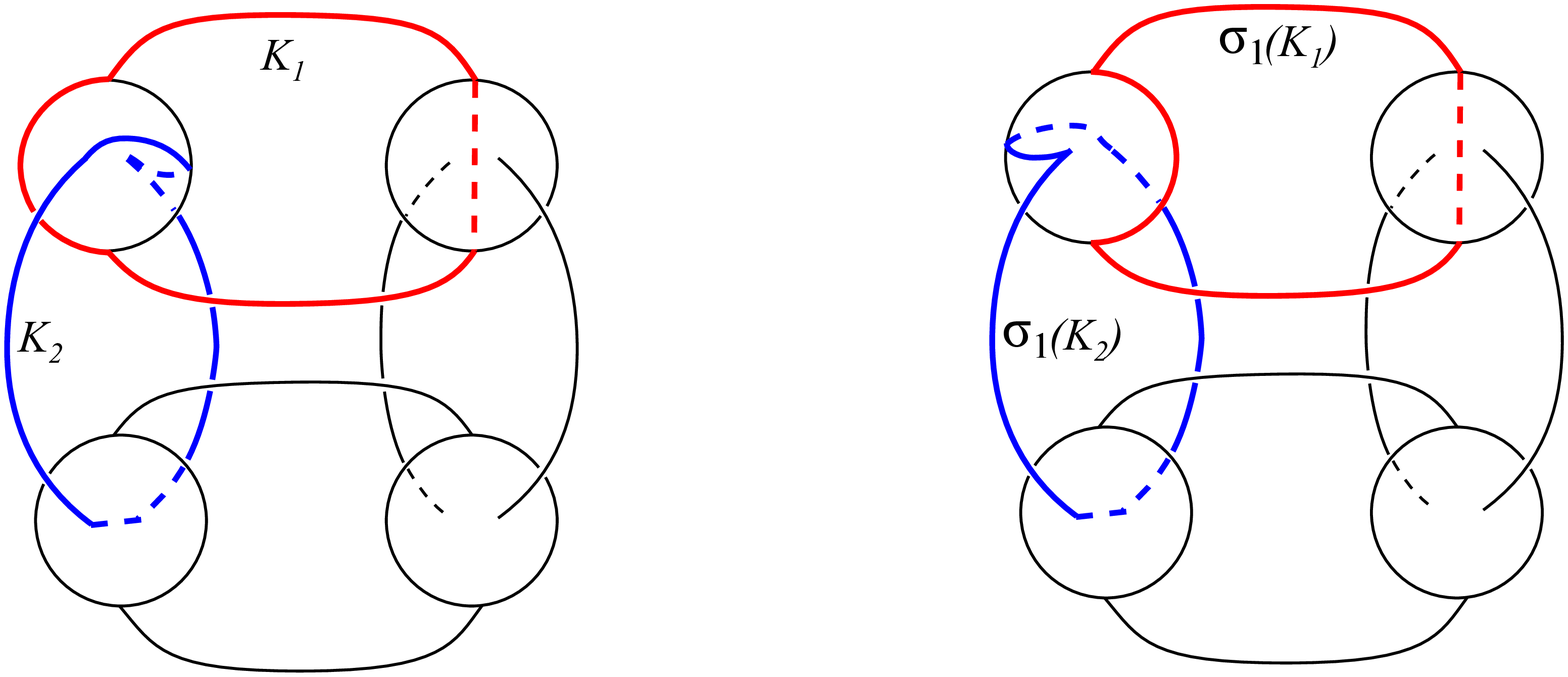}}

Figure 6
\end{center}

We will describe the four generators $\{\sigma_1, \sigma_2, \rho_1,
\rho_2\}$ of the action of $ G\cong (\Z_2)^4$: on each ball
$\sigma_1$ is a $\pi$-rotation about the axis through the front and
back points along the arrows shown as in Figure 6, which
interchanges $a_1$ and $a_2$ (resp. $c_1$ and $c_2$) but keeps each
$b_{i}$ and $d_{i}$, $j=1,2$. Similarly $\sigma_2\in G$ is a
$\pi$-rotation about the axis through the top and bottom points on
each ball, which interchanges $b_1$ and $b_2$, (resp. $d_1$ and
$d_2$), but keeps each $a_{i}$ and $c_{i}$, $j=1,2$. The generator
$\rho_i$ is a $\pi$-rotation of the whole handlebody about the axis
$l_i$, $i=1,2$. One can check that
$\{\sigma_1,\sigma_2,\rho_1,\rho_2\}$ generates the abelian group
$G$. Note that $\rho_1$ and $\rho_2$ are extendable.

For any embedding $V_5 \hookrightarrow S^3$, we construct a link
$\{K_1,K_2\}$ in the handlebody $V_5$ as in Figure 6 where the image
$\sigma_1\{K_1,K_2\}$ is shown on the right hand side. The linking
numbers locally differ by $1$ in the upper-left ball. So $\sigma_1$
is not extendable.
\end{proof}

\begin{theorem}\label{cyclic} $CE_g =  2g+2$ if $g$ is even, and $2g-2$ if
$g$ is odd.
\end{theorem}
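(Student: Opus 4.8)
\textbf{Proof plan for Theorem \ref{cyclic}.}

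The plan is to establish matching upper and lower bounds, exactly as in the proof of Theorem \ref{abelian}. For the upper bound, I would first argue that $CE_g \le CH_g$: since a cyclic group is in particular abelian, an extendable $G$-action on $\Sigma_g$ extends to each component of $S^3 \setminus \Sigma_g$, and by \cite[Theorem 2]{RZ} the $G$-action extends to a handlebody $V_g$, so $G$ acts on $V_g$. Now invoke Theorem \ref{cyclic2}, which classifies cyclic actions on $V_g$ of order at least $2g-2$. The candidates of order exceeding $2g-2$ are only: $\Z_{2g+2}$ when $g$ is even; $\Z_{2g}$ for all $g$ (plus the sporadic $g=6$ case); and $\Z_{2g-1}$ for $g=2,8$. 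So for even $g$ the largest \emph{a priori} possible order is $2g+2$, and for odd $g$ I must rule out $\Z_{2g}$ (and the two sporadic $\Z_{2g-1}$ cases, which occur at $g=2$ even and $g=8$ even, hence are irrelevant to the odd case anyway — but for completeness the $g=2$ even sub-case $\Z_3$ has order $3 < 2g+2 = 6$, so it does not affect the even bound either).

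The crucial step is therefore: for odd $g$, a $\Z_{2g}$-action on $V_g$ is \emph{not} extendable over $S^3$. Here I would argue as follows. If $\Z_{2g}$ acted on $(S^3,\Sigma_g)$ extending its action on $\Sigma_g$, then since every finite cyclic group acting orientation-preservingly on $S^3$ is conjugate into $SO(4)$ (by the geometrization theorem, as already used in the proof of Theorem \ref{abelian}), the action is a standard rotation, and in particular the unique involution $\iota \in \Z_{2g}$ acts on $S^3$ with fixed-point set either empty or a circle. I would combine this with the handlebody orbifold data from Theorem \ref{cyclic2}(2): the $\Z_{2g}$-action on $V_g$ comes from the surjection $\Z_2 * \Z_{2g} \to \Z_{2g}$, whose graph of groups $\Gamma(\Z_2,1,\Z_{2g})$ forces a particular fixed-point / quotient-orbifold structure on $\Sigma_g$ incompatible, when $g$ is odd, with the constraints imposed by a rotation of $S^3$ — essentially a parity obstruction coming from the Euler characteristic / Riemann--Hurwitz bookkeeping on $\Sigma_g/\Z_{2g}$ together with which power of the generator can have nonempty fixed set on a surface embedded compatibly in $S^3$. (A cleaner route, paralleling the $(\Z_2)^4$ argument in the preceding lemma, is to exhibit an explicit link in $V_g$ whose linking data changes under the generator of $\Z_{2g}$ when $g$ is odd, giving a linking-number obstruction that avoids geometrization entirely.)

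For the lower bound I would appeal to Section 4: Example 4.2 (together with Examples 4.1 and the discussion showing $CE^o_g = CE_g$) exhibits, for even $g$, an unknotted embedding $\Sigma_g \subset S^3$ with an extendable $\Z_{2g+2}$-action, and for odd $g$ an extendable $\Z_{2g-2}$-action. Granting those constructions, one has $2g+2 \le CE^o_g \le CE_g \le CH_g = 2g+2$ for even $g$, and $2g-2 \le CE^o_g \le CE_g \le 2g-2$ for odd $g$ (the last inequality being the content of the non-extendability argument above, since anything of order $> 2g-2$ on $V_g$ for odd $g$ is a $\Z_{2g}$ ruled out by that step, noting $2g$ and $2g-2$ are the only possibilities in that range by Theorem \ref{cyclic2}). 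This gives $CE_g = CE^o_g = 2g+2$ for even $g$ and $2g-2$ for odd $g$. I expect the main obstacle to be the non-extendability of the $\Z_{2g}$-action for odd $g$: verifying that the orbifold-theoretic structure of this action genuinely conflicts with an $SO(4)$-rotation of $S^3$ (or, alternatively, setting up and computing the linking-number obstruction carefully) is the only nonroutine point; everything else is bookkeeping with results already in hand.
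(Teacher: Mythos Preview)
Your overall architecture is exactly the paper's: reduce to handlebody actions via \cite[Theorem 2]{RZ}, invoke Theorem~\ref{cyclic2} to see that for odd $g$ the only candidate above $2g-2$ is the $\Z_{2g}$-action coming from $\Z_2 * \Z_{2g} \to \Z_{2g}$, and then rule that action out; lower bounds come from Examples~4.1 and~4.2. So the plan is correct and matches the paper.

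Where you are vague is precisely where the paper does the real work, and your suggested mechanism for the obstruction is off. There is no ``parity obstruction from Riemann--Hurwitz bookkeeping'' here; what the paper actually does is first extract from the handlebody orbifold $V_g/\Z_{2g}$ (two balls with singular arcs of indices $2$ and $2g$ joined by a regular $1$-handle) an explicit geometric model of the action on $\Sigma_g$: a $2g$-punctured sphere $S^2_*$ with $g$ tubes $N_1,\dots,N_g$ attached along opposite pairs of punctures, the generator $\sigma$ rotating $S^2_*$ by $\pi/g$ and permuting the $N_i$ cyclically. With this model in hand the paper gives two arguments. The first is close in spirit to your $SO(4)$/fixed-circle idea but sharper: by Smith theory the fixed set of $\sigma$ on $S^3$ is a circle $C$ of branching index $2g$; the fixed set of the involution $\sigma^g$ is also a circle and must coincide with $C$, yet $\sigma^g$ has fixed points on the tube $N_1$, forcing $\sigma$ itself to fix a point of $N_1$ --- impossible since $\sigma(N_1)=N_2$. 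The second is exactly your parenthetical linking-number route: build two knots $K_1,K_2$ on $\Sigma_g$ with $\sigma(K_1)=K_2$, and compute that their mod~$2$ linking numbers with the fixed circle $K_0$ differ. So your linking-number suggestion is one of the paper's two proofs, but you would not get either argument off the ground without first writing down the concrete model of the $\Z_{2g}$-action that the orbifold data hands you.
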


\begin{proof}
In Example 4.1 we shall describe  a cyclic group action of order
$2g+2$ on $\Sigma_g$ which extends over $S^3$ for each even  $g>1$,
and in Example 4.2 a cyclic group action of order $2g-2$ on
$\Sigma_g$ which extends over $S^3$ for each odd  $g>1$. Hence
$$CE_g\ge 2g+2 \text{ for even $g>1$},\,\,\, CE_g\ge 2g-2 \text{ for  odd
$g>1$.}   \qquad (3.3)$$

Suppose the $G$ action on $\Sigma_g$ is extendable. Still applying
\cite[Theorem 2]{RZ}, we have that the $G$ action on $\Sigma_g$
extends to $(V_g, \partial V_g=\Sigma_g)$.

 By Theorem \ref{cyclic2} (1) for each even $g$ we have
$AH_g=2g+2$ . By Theorem \ref{cyclic2} (2) (3) for each odd $g$ a
cyclic group $G$ of order $|G|>2g-2$ acting on $V_g$ must be
$\Z_{2g}$, is associated to surjection $\Z_2 * \Z_{2g} \to \Z_{2g}$.

\noindent \textbf{Claim:} {\it The $\Z_{2g}$ action on $\partial
V_g=\Sigma_g$ which is the restriction of the $\Z_{2g}$ acts on
$V_g$ is not extendable.}

With this Claim we have
$$CH_g\le 2g+2 \text{ for even $g>1$},\,\,\, CH_g\le 2g-2 \text{ for  odd
$g>1$.}\qquad (3.4)$$

Combining (3.3) and (3.4), Theorem \ref{cyclic} is proved.

\noindent \textbf{Proof of the Claim.} We must have a close look on
the $\Z_{2g}$ action on $V_g$. By the discussion made in \cite{MMZ},
\cite{Zi3}, the handlebody orbifold $X=V_g/\Z_{2g}$ must consist of
two 3-balls with singular arcs of indices 2 and $2g$, respectively,
connected by a regular 1-handle as shown in the left hand side of
Figure 7. Now the pre-image of the  $3$-ball with singular arc of
index ${2g}$ is just an ordinary 3-ball $B^3$ in  $V_g$, the
$\mathbb{Z}_{2g}$-action on it is a $\frac{\pi}{g}$-rotation, and
the pre-image of the remaining part of the handlebody orbifold $X$
in $V_g$ consists just of $g$ 1-handles attached to opposite
$\Z_{2g}$-equivariant disks on $B^3$. The right hand side  of Figure
7 is the case of $V_3$.

Hence the $\Z_{2g}$ action on $\Sigma_g=\partial
V_g=S^2_*\cup\{N_1,..., N_g\}$ is obtained from the 2-sphere $S^2_*$
with $2g$ punctures by attaching $g$ tubes  $N_1, ..., N_g$ along
$g$ pairs of opposite punctures.
\begin{center}
\scalebox{0.7}{\includegraphics*[0pt,0pt][435pt,190pt]{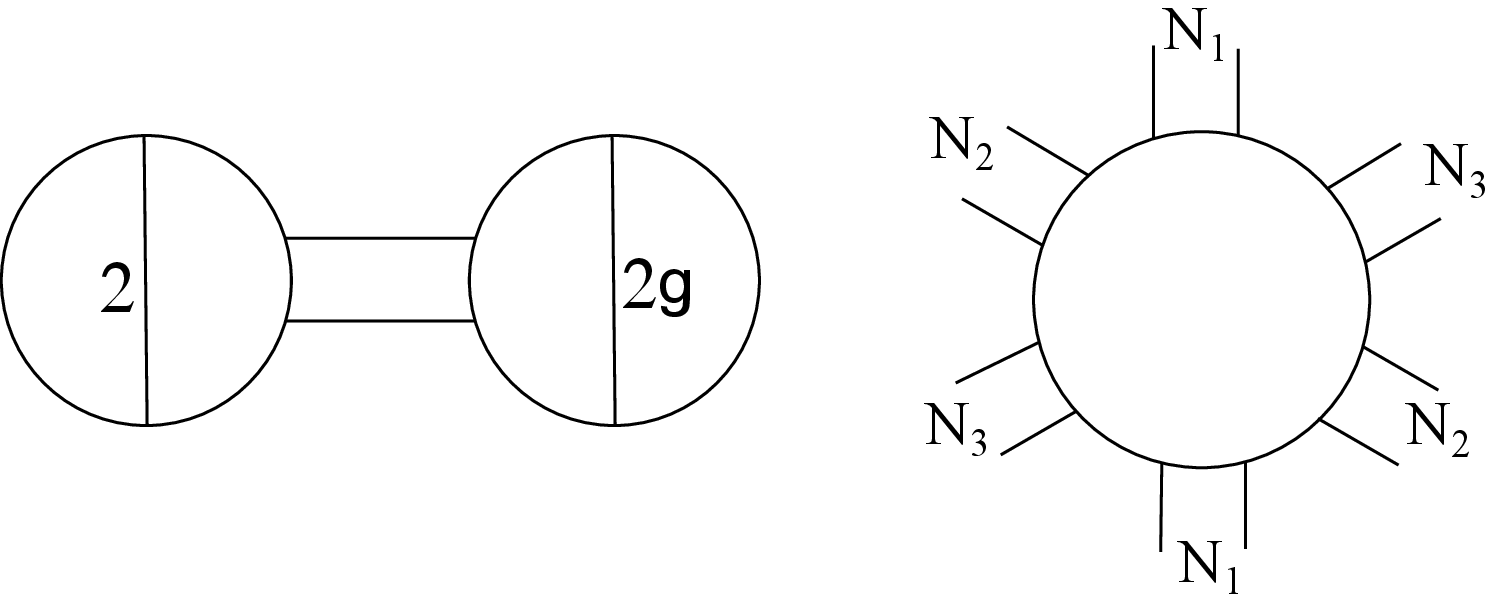}}

Figure 7
\end{center}

Now we are going to give two different proofs that this $\Z_{2g}$
action on $\Sigma_g$ is not extendable.

\noindent \textbf{Proof 1.} The first one invokes Smith theory.

  Suppose the $\Z_{2g}$ action on $\Sigma_g$ extends an
action on $(S^3, \Sigma_g)$ for some embedding $\Sigma_g\to S^3$,
and let $\sigma$ be a generator of this extension. Since
$\sigma|S^2_*$ is a rotation of order $2g$  with two fixed points,
the fixed point set of $\sigma$ is not empty. By Smith theory the
fixed point set of the group $<\sigma>$ acting on $S^3$ must be a
circle $C$.  It follows that the (singular or branching) index of
$C\cap S^2_*$ must be $2g$, therefore the index of the whole circle
$C$ must be $2g$, that is to say the whole $C$ is the fixed point
set $\sigma$. On the other hand, $\sigma^g$ is a $\pi$-rotation on
the tube $N_1$ which has two fixed points $x, y$, therefore $x \in
C$ which implies that $\sigma$ has fixed points on $N_1$. Since
$g>1$, $\sigma$ sends the whole $N_1$ to $N_2$ which gives a
contradiction.

\noindent \textbf{Proof 2.} The second one is elementary and using
linking number only.

Choose an arc $\gamma$ on the boundary of the orbifold $X$, as
showed in Figure 8. The pre-image of $\gamma$ consists of $g$ arcs
$\gamma_i$, $i=1,\ldots, g$ on the surface $\Sigma_g$, equivariant
under the action of $G$. Let $D$ denote the upper  hemisphere of the
$2g$-punctured sphere $S^2_*$ described above. Then the boundary of
$\gamma_i$ divides $\partial D$ into $2g$ arcs denoted by $\alpha_i$
and $\beta_i$ such that the $\frac{\pi}{g}$-rotation maps $\alpha_i$
to $\beta_i$; see the right hand side of Figure 8.

\begin{center}
\scalebox{0.6}{\includegraphics*[0pt,0pt][524pt,166pt]{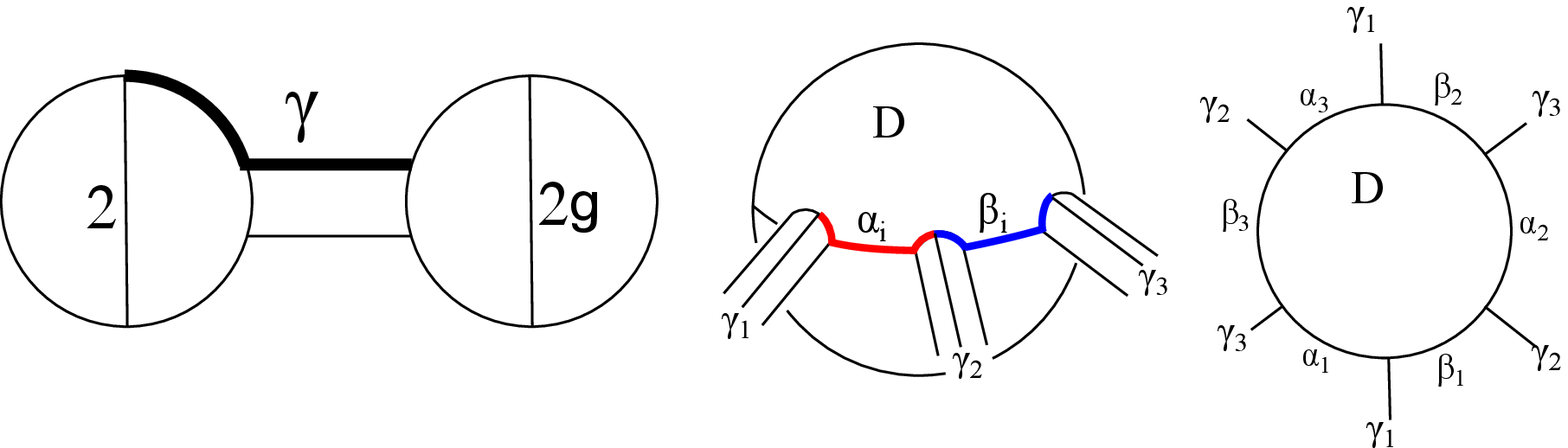}}

Figure 8
\end{center}

Now consider the embedding $\Sigma_g\hookrightarrow S^3$, and let
$$K_1 =  \gamma_i \bigcup \alpha_i,\,\,\,
K_2 =  \gamma_i \bigcup  \beta_i$$ which are knots in $S^3$, see
Figure 9.

\begin{center}
\scalebox{0.6}{\includegraphics*[0pt,0pt][296pt,163pt]{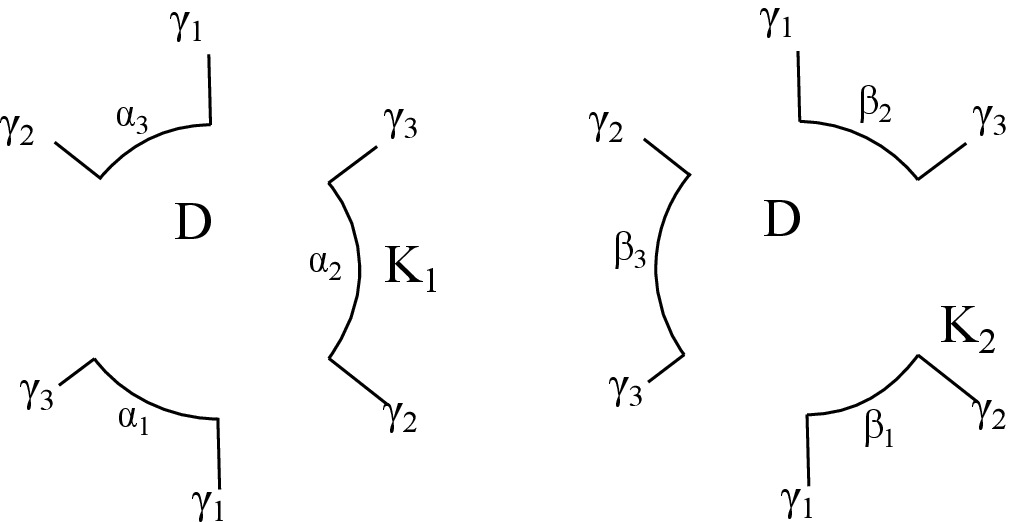}}

Figure 9
\end{center}

Suppose the $G$-action extends to a $G$-action on $S^3$, and let
$\sigma\in G $ be the generater of $G$ such that the restriction of
$\sigma$ to $D$ is the $\frac{\pi}{g}$-rotation. Then we have
$\sigma(K_1)=K_2$. The fixed point set of the periodic map $\sigma$
is not empty (it has a fixed point $x$ on $D$). Denote by $K_0$ the
circle component of the fixed point set of $\sigma$ containing $x$.
 Now $\sigma\{K_0,K_1\}=\{K_0,K_2\}$.

Let us compute the mod 2 linking numbers $lk_2(K_0,K_1)$ and
$lk_2(K_0,K_2)$ to reach a contradiction.
Choose a standard embedded $S^2$ in $S^3$, which contains the disk
$D$. Project each knot $K_i$ and $K_0$ to this $S^2$; we use this
projected diagram to compute the linking numbers $lk_2(K_0,K_i)$:
for each crossing, if the arc $K_0$ goes over the arc of $K_i$, then
this crossing contributes $1$ to $lk_2(K_0,K_i)$, and if the arc
$K_0$ goes under the arc of $K_i$ then this crossing contributes $0$
to $lk_2(K_0,K_i)$ (this is just an application of  the general
linking number method, see \cite{Ro} for details).

We notice that the only differences between $lk_2(K_0,K_1) $ and
$lk_2(K_0,K_2)$ are coming from the crossings of $K_0$ with
$\partial D$. There are three cases, shown in Figure 10.

\noindent \textbf{Case 1.} Both ends of the arc in the disk go over
$\partial D$. If the two ends are both over $\alpha_i$ or both over
$\beta_i$, then it contribute $0$ to both $lk_2(K_0,K_1)$ and
$lk_2(K_0,K_2)$. If one end goes over $\alpha_i$ and the other goes
over $\beta_i$ then this arc contributes $1$ to both $lk(K_0,K_1)_2$
and $lk_2(K_0,K_2)$.

\noindent \textbf{Case 2.} Both ends of the arc in the disk go under
$\partial D$. In this case the arc contributes $0$ to both
$lk_2(K_0,K_1)$ and $lk_2(K_0,K_2)$.

\noindent \textbf{Case 3.} One  end of the arc in the disk goes over
$\partial D$, and the other end goes under $\partial D$. In this
case, the arc of $K_0$ must intersect $D$ in its interior, and that
means it goes through the only fixed point in $D$. So there is
exactly one such arc. If the ``over end" is over $\alpha_i$, then
this arc contributes $1$ to $lk_2(K_0,K_1)$ but $0$ to
$lk_2(K_0,K_2)$. If the ``over end" is over $\beta_i$ then this arc
contributes $0$ to $lk_2(K_0,K_1)$ but $1$ to $lk_2(K_0,K_2)$.
Anyway, it is not the same for the two linking numbers.

\begin{center}
\scalebox{0.7}{\includegraphics*[0pt,0pt][375pt,141pt]{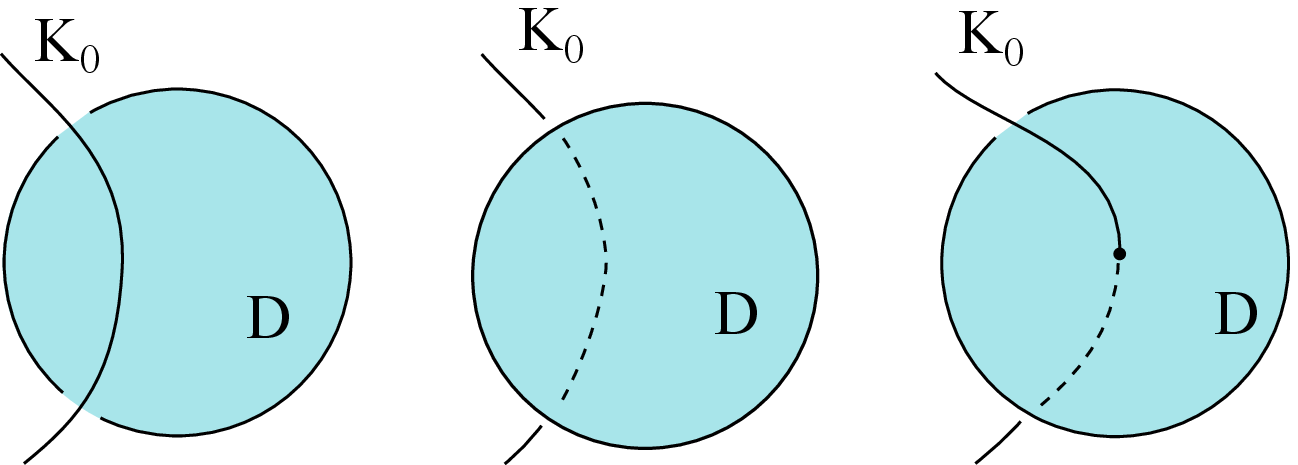}}

Figure 10
\end{center}

From the above, we must have that
$$lk_2(K_0,K_1)\neq lk_2(K_0,K_2).$$
Since $\sigma$ is an automorphism of $S^3$ which send $\{K_0,K_1\}$
to $\{K_0,K_2\}$, this is a contradiction. So for $|G|=2g$ the
$G$-action on $\Sigma_g$ does not extend to a $G$-action on $S^3$.
\end{proof}



\section{Intuitive view of large symmetries of $(S^3,\Sigma_g)$}

In this section we will present some extendable group actions on
surfaces with ``large" symmetry. Also if the existence of such
examples often can be derived from the powerful orbifold theory, we
would like to see   how these symmetries stay in the symmetry of our
3-sphere in a more direct and intuitive way, as we mentioned in the
end of the introduction.

\noindent \textbf{Remark:} In the ten examples below, the first
seven are constructed before applying orbifold theory to get the
result in Section 2 and 3, and the last three examples was
constructed after we knowing results in Sections 2 and 3. The
constructions of first nine examples use no information from
orbifold theory.

\noindent \textbf{Example 4.1} For every $g>1$, there is an abelian
group $G\cong \mathbb{Z}_2\times\mathbb{Z}_{g+1}$ which acts on
$\Sigma_g$ such that the action extends to a $G$-action on $S^3$,
for the standard embedding of $\Sigma_g\subset S^3$. When $g$ is
even, we get a cyclic group action of order $2g+2$ on $\Sigma_g$
which extends over $S^3$.

\begin{center}
\scalebox{0.6}{\includegraphics*[0pt,300pt][453pt,576pt]{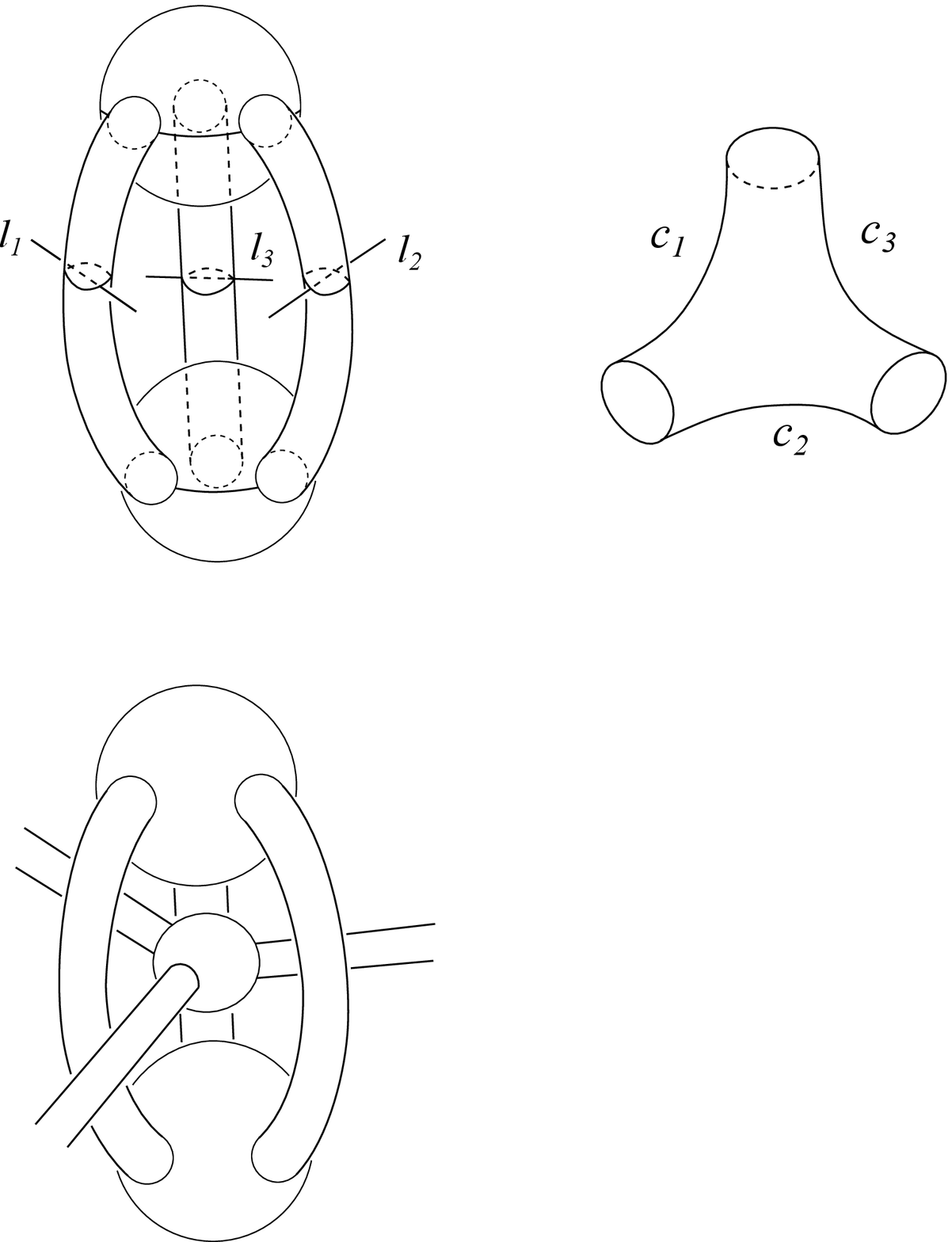}}

Figure 11
\end{center}

We embed  $V_g$ in $S^3$ as two $3$-balls together with $g+1$
handles attached to the equator, see Figure 11 for $g=2$. One can
easily see a $\mathbb{Z}_{g+1}$-action by a rotation on $S^3$ which
keeps $V_g$ invariant. We construct a $\mathbb{Z}_2$ involution on
$V_g$ as follows:  The involution restricted to each handle is a
$\pi$-rotation about each line $l_i$ drawn in Figure 11, and  the
involution maps each of the two 3-balls to the other one, without
any rotation or reflection. One can carefully check the attaching
disks to show this is well defined;  moreover the involution and the
rotation commutate and give an action of $\Z_{g+1}\times \Z_2$ on
$(S^3, V_g)$. When $g$ is even,  the composition of the involution
and the $\frac{2\pi}{g+1}$-rotation is an order $2g+2$ map.


Another simple and useful way to observe this action is just to
think about a pair of pants, on the right hand side of Figure 11.
Its neighborhood in $S^3$ is a genus $2$ handlebody. This pair of
pants consists of two pieces of cloth sewn together along three
lines $c_i$. So this is just the same as on the left hand side,
letting each piece of cloth correspond to a $3$-ball and  each $c_i$
to a handle. Now the rotation on the pair of pants is quite obvious
and is extendable. The involution just changes the positions of the
two pieces of cloth, keeping $c_i$ fixed. This makes the pants inner
to outer, and because you can really do this practically, it extends
to an involution of $S^3$.



\noindent \textbf{Example 4.2} Now we construct an example of a
$\Z_{2g-2}$ action on $V_g$ for $g$ odd. Let us consider $S^3$ as
the unit sphere in $\mathbb{C}^2$,
$$S^3=\{(z_1,z_2)\in\mathbb{C}^2\mid |z_1|^2+|z_2|^2=1\}.$$
There is a solid torus $T\in S^3$,
$$T=\{(z_1,z_2)\in S^3\mid |z_1|\leq\frac{\sqrt{2}}{2}\}.$$
We choose $g-1$ pairs of points $a_k$ and $b_k$, $k=1, 2, \ldots,
g-1$,
$$a_k=(\frac{\sqrt{2}}{2}e^{\frac{2k\pi
i}{g-1}},\frac{\sqrt{2}}{2}e^{\frac{k\pi}{g-1}}),$$
$$b_k=(\frac{\sqrt{2}}{2}e^{\frac{2k\pi
i}{g-1}},\frac{\sqrt{2}}{2}e^{\frac{(k+g-1)\pi i}{g-1}}).$$

In the disk
$$D_k=\{(re^{\frac{2k\pi i}{g-1}},z_2)\in S^3\mid
r\geq\frac{\sqrt{2}}{2}\},$$  there is a unique diameter $\gamma_k$
connecting $a_k$ and $b_k$. Let $N_k$ be a neighborhood of
$\gamma_k$ in $S^3$; then the solid torus $T$ together with the
$(g-1)$  handles $N_k$ gives an embedding of $V_g$ into $S^3$.

\begin{center}
\scalebox{0.7}{\includegraphics*[0pt,0pt][190pt,190pt]{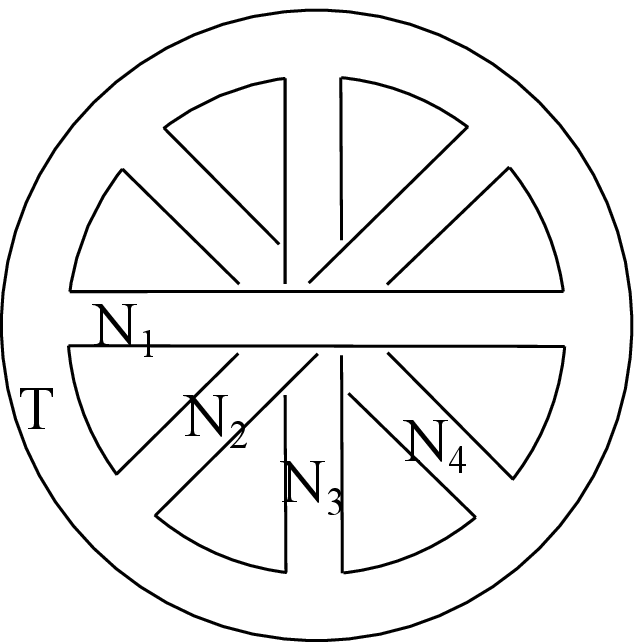}}
Figure 12
\end{center}

Now the $(2g-2)$ order action on $S^3$ can be described like this:
$$\sigma:S^3\rightarrow S^3$$
$$(z_1,z_2)\mapsto (z_1e^{\frac{2k\pi i}{g-1}},z_2e^{\frac{k\pi
i}{g-1}}).$$  This action keeps $T$ invariant and sends each $N_k$
to $N_{k+1} (mod (g-1))$. A rough picture is showed in Figure 12 for
$g=5$.

\noindent \textbf{Example 4.3} For every  $g>1$, we will construct a
group $G$ of order $4(g+1)$ which acts on $S^3=V_g\bigcup V_g'$. We
consider $V_g$ as the neighborhood of a sphere $S^2$ with $g+1$
punctured holes. We choose the holes all on the equator, centered at
the vertices of a regular $g+1$-polygon. There is a dihedral group
$D_{g+1}$ acting on $S^2$ which keeps the holes invariant (as a
set). And there is also a $\Z_2$ action changing the inner and outer
of $S^2$, as described in Example 4.1. So there is a $D_{g+1}\times
\Z_2$ action on $V_g$. This group has order $4(g+1)$.

\noindent \textbf{Example 4.4} For every square number $g>1$, we
construct a group $G$ of order $4(\sqrt{g}+1)^2$ which acts on
$S^3=V_g\bigcup V_g'$ (notice that this is greater than $4(g+1)$).

With $g=k^2$, the group $G$ is a semidirect product
$$(\Z_{k+1}\times\Z_{k+1})\rtimes_\varphi(\Z_2\times\Z_2).$$
Writing $\Z_{k+1}\times\Z_{k+1}=\langle x, y | xy=yx,
x^{k+1}=y^{k+1}=1\rangle$, $\Z_2\times\Z_2=\langle s, t | st=ts,
s^2=t^2=1\rangle$, the semidirect product is given by
$$\varphi: sxs^{-1}=y,\quad sys^{-1}=x,\quad txt^{-1}=x^{-1},\quad
tyt^{-1}=y^{-1}.$$

Consider $S^3$ as the unit sphere in $\mathbb{C}^2$
$$S^3=\{(z_1,z_2)\in\mathbb{C}^2\mid |z_1|^2+|z_2|^2=1\},$$
and let $$a_j=(e^{\frac{2j\pi i}{k+1}},0),\qquad
b_j=(0,e^{\frac{2j\pi i}{k+1}}), \qquad j=0,1,\dots,k.$$ Then the
$G$-action on $S^3$ is given by:
$$x: (z_1,z_2)\mapsto(e^{\frac{2\pi i}{k+1}}z_1,z_2),$$
$$y: (z_1,z_2)\mapsto(z_1,e^{\frac{2\pi i}{k+1}}z_2),$$
$$s: (z_1,z_2)\mapsto(z_2,z_1),$$
$$t: (z_1,z_2)\mapsto(\bar{z_1},\bar{z_2}).$$
It is easy to check this is a faithful orientation-preserving
action.

\begin{center}
\scalebox{0.3}{\includegraphics*[0pt,0pt][800pt,315pt]{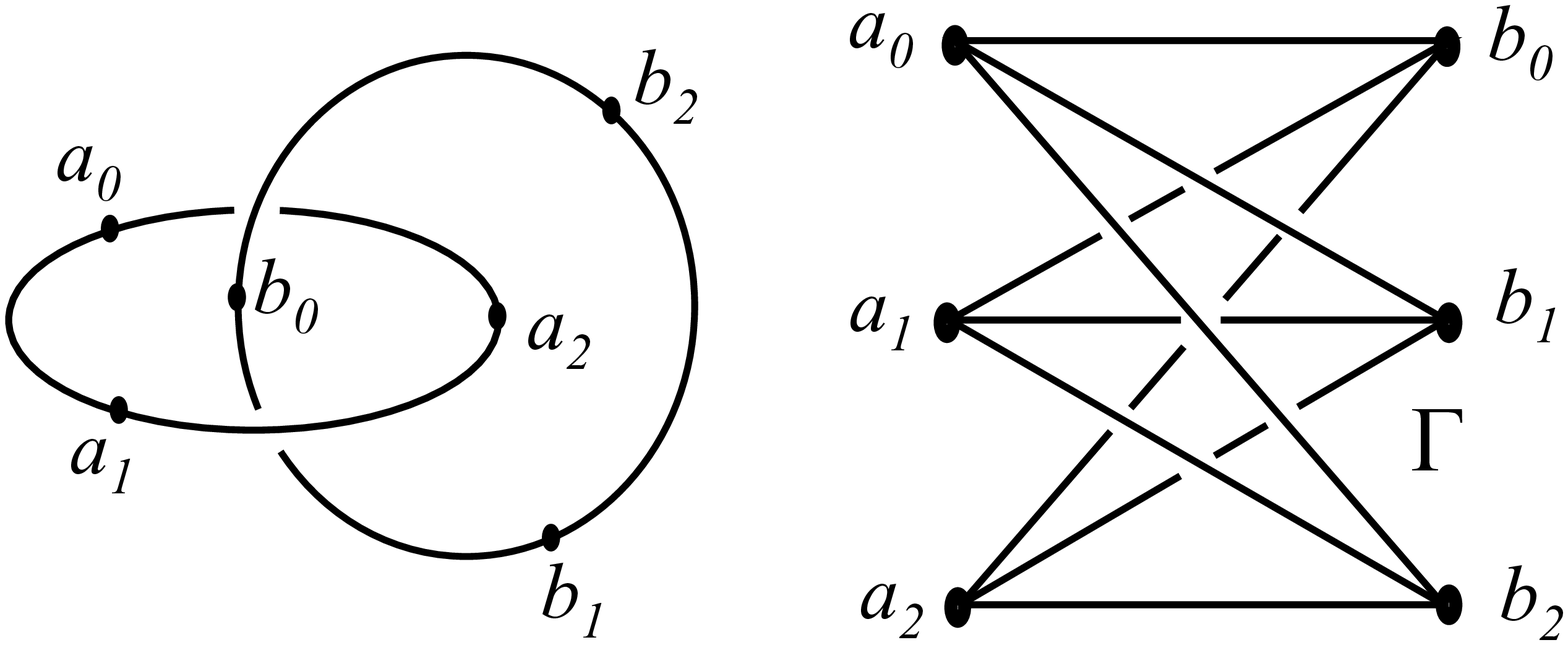}}

Figure 13
\end{center}

Notice that this $G$-action keeps the set $\{a_i, b_j\},
i,j=0,1,\dots,k $, invariant. If we join each $a_i$ and $b_j$ by a
geodesic in $S^3$, we get a two-parted graph $\Gamma\in S^3$ with
$2k+2$ vertices and $(k+1)^2$ edges. Hence $\chi(\Gamma)=-k^2+1$. A
neighborhood of $\Gamma$ in $S^3$ is a handlebody of genus $k^2=g$.
Since the $G$-action maps $\Gamma$ to itself, it induces an action
on $V_g$. Figure 13 gives the picture for $g=4$.

For $g=4$, this example gives a group action of order $36$, which is
maximal.

\noindent \textbf{Example 4.5} We view  $S^3=\{(x,y,z,w)\mid
x^2+y^2+z^2+w^2=1\}$ and  $S^2=\{(x,y,z,0)\mid
x^2+y^2+z^2=1\}\subset S^2$. For each finite group $G\leq O(3)$
which acts on $S^2\subset R^3$, we define $\widetilde{G}\leq SO(4)$
acts on $S^3$  which keeps the $S^2$ invariant as below: For each
$\sigma\in G$, define $\widetilde{\sigma}:S^3\rightarrow S^3$ by
\begin{equation*}\widetilde{\sigma}(x,y,z,w)\mapsto
\begin{cases} (\sigma(x,y,z),w) & \sigma \text{is
orientation-preserving};\\(\sigma(x,y,z),-w) & \sigma \text{is
orientation-reversing}.
\end{cases}
\end{equation*}

Now if we choose $G$ to be the symmetry group of a tetrahedron with
vertices on this $S^2$, then $G\cong S_4$. Let $\{v_1,v_2,v_3,v_4\}$
be the vertices of this tetrahedron, then $G$ and $\widetilde{G}$
keeps this vertices set invariant. If we choose $4$ holes puncturing
this $S^2$ at the positions of $\{v_1,v_2,v_3,v_4\}$, we get a
$4$-punctured sphere $X$, its neighborhood in $S^3$ is a handlebody
$V_3=N(X)$. And $\widetilde{G}$ act on $(V_3, S^3)$. Note that
$\widetilde{G}\cong G\cong S_4$, $|\widetilde{G}|=24$, this gives an
example for $g=3$ and $|\widetilde{G}|=24=12(g-1)$.

Similarly we can choose $G$ to be the symmetry group of a cube or a
dodecahedron, we get $\widetilde{G}\cong S_4 \times \Z_2$ for $g=5$
and $\widetilde{G}\cong A_5 \times \Z_2$ for $g=11$. They all
satisfies $|\widetilde{G}|=12(g-1)$.


\noindent \textbf{Example 4.6} The quotients $S^3/I^*$ of $S^3$ by
the binary icosahedral group is the famous Poincar\'e homology
3-sphere which is also obtained by identifying pairs of faces of the
dodecahedron. The covering $p: S^3\to S^3/I^*$ provides a
tessellation of $S^3$ by dodecahedra. Since the deck group is of
order 120 and the symmetry group of the dodecahedron is 60, it is
easy to see there is a group $G$ of order $120\times 60=7200$ acting
on this tessellation, and in particular acting on the boundary
surface $ \Sigma_g$ of a regular neighborhood of the 1-skeleton of
this tessellation. Let $v_i$ be the number of the $i$-dimensional
cells of this tessellation; then $v_0-v_1+v_2-v_3=\chi(S^3)=0$.
Clearly $v_3=120$ and $v_2=120\times 12 /2=720$. So
$v_1-v_0=v_2-v_3=600$, and therefore $g=v_1-v_0+1=601$ and $|G| =
7200 = 12(g-1)$.


This example can be also considered as the orientation preserving
 symmetry group of the regular 120-cell in 4-space,  in the spirit
of the next example.

\noindent \textbf{Example 4.7} Let $\Delta$ be the 4-dimensional
regular Euclidean simplex and  $\Theta$ be the 4-dimensional
Euclidean cube centered at the origin of $E^4$ and  inscribed in the
unit sphere $S^3$. The radical projection of their boundaries to
$S^3$ gives two regular tessellations of $S^3$, still denoted as
$\partial \Delta$ and $\partial \Theta$ respectively.

Using the notions defined in Example 4.6, for $\partial \Delta$ we
have $v_0=v_3=5$, $v_1=v_2=10$. Since $10-5+1=6$,   the boundary
surface  of the regular neighborhood of the 1-skeleton of this
tessellation is $\Sigma_6$. Each 3-dimensional face is a tetrahedron
which has a symmetry of order 12, and then it follows that a group
$G$ of $60 = 12\times 5 = 12(g-1)$ acts on this tessellation, and in
particular acts on $\Sigma_6$.

For $\partial \Theta$, since $\Theta$ is the product of the
3-dimensional cube with an inteval, it derived easily that  we have
$v_0=16$, $v_1=32$, $v_2=24$ and $v_3=8$. Since $32-16+1=17$, the
boundary surface of the regular neighborhood of the 1-skeleton of
this tessellation is $\Sigma_{17}$. Each 3-dimensional face is a
3-dimensional cube which has a symmetry of order 24, and it follows
that a group $G$ of order $24\times 8 = 192 = 12(g-1)$ acts on this
tessellation, and in particular on $\Sigma_{17}$.

So in this example we get actions of groups on $(S^3,  \Sigma_g)$ of
maximaum order $12(g-1)$,  for $g=6$ and 17.

\noindent \textbf{Example 4.8} Let $M'=P\times S^1$, where $P$ is
the oriented pair of pants, with the induced orientation on
$\partial P=\{c_1,c_2,c_3\}$, and $S^1$ is oriented and represented
by a curve $h$. Now attach $3$ solid tori $N_i$ along the first
three boundary tori of $M'$ so that the meridian of $N_i$ is
identified with a curve of slope $l_i = 2c_i+ h$, $i=1,2,3$. Denote
the resulting manifold by $M$, which has the following properties:

(1) Recall that the full symmetry group of the pair of pants $P$ is
$G_P=D_{3}\times \Z_2$, where the $D_3$ action on $P$ is orientation
preserving, and the $\Z_2$-action, which exchanges the inner and
outer of $P$ as described in Example 4.3, is orientation revering.
We extend the $G_P=D_{3}\times \Z_2$ action on $P$ to $P\times S^1$
in an orientation preserving way by matching $D_3$ with the identity
of $S^1$ and $\Z_2$ with an orientation reversing involution (
reflection) on $S^1$. The latter extends over $M$ since it preserves
the set of attaching slopes.

(2) The map $\pi_1(P)\to \pi_1(M)$ is a surjection, therefore the
map $\pi_1(\Sigma_2)\to \pi_1(M)$ is a surjection where $\Sigma_2$
is the boundary of the regular neighborhood of $P$ in $M$; here all
maps are induced by inclusions, and we use the fact that  each
attaching curve goes over the $S^1$ direction only once.

(3) $M$ is a spherical 3-manifold with $\pi_1(M)=\Z_3\times D^*_8$,
where $D^*_8$ is the quaternion group of order 8, see \cite{Or}, in
particular $|\pi_1(M)| = 24$.

Now consider the covering $p: S^3\to M$. By (2) and Lemma
\ref{connected} the preimage of $\Sigma_2$ is connected, therefore
it is the surface $\Sigma_{25}$ by (3) which is invariant under the
group of order $24\times 12 = 12(g-1)$.

\noindent\textbf{Example 4.9}  We perform $-1$ surgery on each
component of the link given in Figure 14. considering the Wirtinger
presentation, the fundamental group of the resulting manifold is
$<x, y, z|x=yz, y=zx, z=xy>$ which is isomorphic to the quaternion
group $Q$ by the map induced by $x\mapsto i, y\mapsto j, z\mapsto
k$. The resulting manifold is the quaternion manifold $S^3/Q$.
Consider a point in front of the paper and a point behind. Through
each of the three components of the link  choose  a string
connecting these two points. We get a $\theta$-graph. The boundary
of a neighborhood of the $\theta$-graph is the surface $\Sigma_2$.
As described before, there is a  group of order 12 acting on $S^3$
which leaves this surface invariant. This action leaves also the
link invariant and extends to the surgered solid tori. Lifting to
$S^3$ we have a
 group action of order 96. Since $\pi_1(\theta)$ is generated by
$x^{-1}z$ and $x^{-1}y$, it is easy to see the homomorphism
$\pi_1(\theta)\rightarrow \pi_1(S^3/Q)$ is surjective. By  Lemma
\ref{connected}, the lift of $\Sigma_2$ is connected and hence is
$\Sigma_9$. Hence we get an extendable group action on $\Sigma_9$,
of
 order $12(9-1) = 96$.

\begin{center}
\scalebox{0.7}{\includegraphics*[0pt,0pt][237pt,213pt]{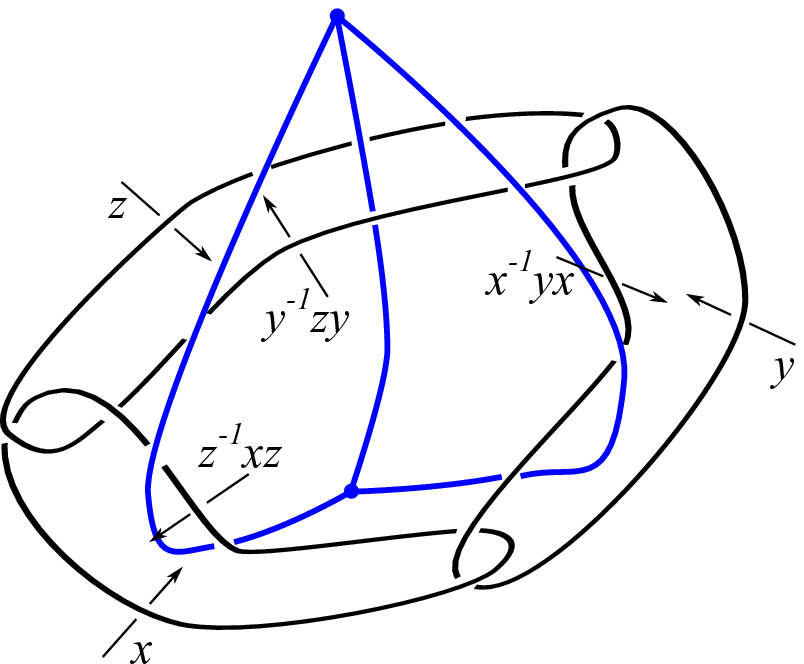}}

Figure 14
\end{center}



Similarly, performing  $+1$ surgery on a trefoil indicated in Figure
15 we obtain the Poincar\'e homology 3-sphere $P$. Letting $u=yx$,
the fundamental group has a presentation $\pi_1(P)=<u, x|
u^3=x^5=(xu)^2>$. It is isomorphic to the binary icosahedral group
$I^*$ by the map induced by $u\mapsto (1+\sigma i+\delta j)/2,
x\mapsto (\sigma+i-\delta k)/2$, where $\sigma=(\sqrt{5}+1)/2,
\delta=(\sqrt{5}-1)/2$. We can similarly construct a $\theta$-graph,
and $\pi_1(\theta)$ is generated by $x^{-1}ux^{-1}\mapsto
(\sigma+\delta j+k)/2$ and $ux^{-2}\mapsto (\sigma-\delta j+k)/2$.
One can verify that $\pi_1(\theta)\rightarrow \pi_1(P)$ is
surjective. Then we lift the neighborhood boundary surface of the
$\theta$-graph and the group action as before and obtain an
extendable action on $\Sigma_{121}$ of order $120 \times 12 = 1440 =
12(g-1)$.

\begin{center}
\scalebox{0.7}{\includegraphics*[0pt,0pt][195pt,180pt]{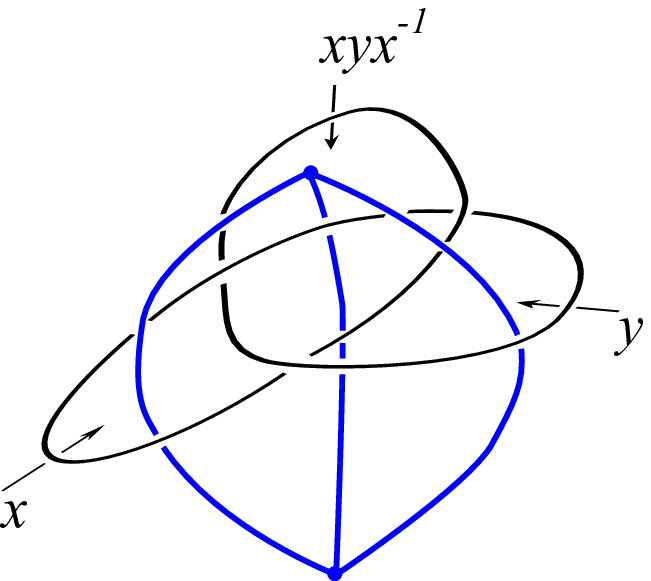}}

Figure 15
\end{center}

\noindent\textbf{Example 4.10.} Considering the 2:1 map
$SO(4)\rightarrow SO(3)\times SO(3)$, the preimage of $O\times O$,
denoted by $\mathbf{O\times O}$, acts on $S^3$ ($O$ denotes the
octahedral group, of order 24).  It has order
$24\times24\times2=1152=12(97-1)$ and the pre-fundamental domain is
a truncated cube \cite{Du2}. If two such domains are adjacent via an
octagon, we draw an edge between the centers of them. Then we get a
graph, and the boundary of the regular neighborhood of this graph is
a surface $\Sigma_{97}$ with an action of $\mathbf{O\times O}$ which
is obviously extendable.

The preimage of $O\times J$, denoted by $\mathbf{O\times J}$, acts
on $S^3$. It has order $24 \times 60 \times 2 = 2880 = 12(241-1)$
and the pre-fundamental domain is a `twice truncated tetrahedron'(or
small tetrahedron in Dunbar's paper). If two such domains are
adjacent via a dodecagon, we draw an edge between their centers.
Then we get a graph, and the boundary of a regular neighborhood of
this graph is a surface $\Sigma_{241}$ with an extendable action of
$\mathbf{O\times J}$.

\end{document}